\documentclass{article}
\usepackage[scale=0.7]{geometry}
\usepackage{amsmath}
\usepackage{amssymb}
\usepackage{amsfonts}
\usepackage{amsthm}
\usepackage{enumitem}
\usepackage{mathrsfs}
\usepackage{hyperref}
\hypersetup{colorlinks = true, linkcolor = blue, anchorcolor = blue, citecolor =blue}
\usepackage{cleveref}
\crefname{equation}{}{}
\usepackage{algorithm}
\usepackage{algorithmic}
\usepackage{graphicx}
\usepackage{float} 
\usepackage{authblk}

\newtheorem{theorem}{Theorem}
\newtheorem{lemma}{Lemma}
\newtheorem{corollary}{Corollary}

\newtheorem{definition}{Definition}
\newtheorem{assumption}{Assumption}
\newtheorem{remark}{Remark}

\newcommand{\x}{\boldsymbol{x}}

\newcommand{\y}{\boldsymbol{y}}
\newcommand{\R}{\mathbb{R}}
\newcommand{\bm}{\boldsymbol}
\newcommand{\abs}[1]{\left| #1 \right|}
\title{Nested Multilevel Monte Carlo with Preintegration for Efficient Risk Estimation}

\author[1]{Yu Xu \thanks{Corresponding author: yu-xu22@mails.tsinghua.edu.cn}}
\author[2]{Xiaoqun Wang}

\affil[1, 2]{Department of Mathematical Sciences, Tsinghua University, Beijing, China}

\date{\today}

\begin{document}

\maketitle

\begin{abstract}
Nested Monte Carlo is widely used for risk estimation, but its efficiency is limited by the discontinuity of the indicator function and high computational cost. This paper proposes a nested Multilevel Monte Carlo (MLMC) method combined with preintegration for efficient risk estimation. We first use preintegration to integrate out one outer random variable, which effectively handles the discontinuity of the indicator function, then we construct the MLMC estimator with preintegration to reduce the computational cost. Our theoretical analysis proves that the strong convergence rate of the MLMC combined with preintegration reaches -1, compared with -1/2 for the standard MLMC. Consequently, we obtain a nearly optimal computational complexity. Besides, our method can also handle the high-kurtosis phenomenon caused by indicator functions. Numerical experiments verify that the smoothed MLMC with preintegration outperforms the standard MLMC and the optimal computational cost can be attained. Combining our method with quasi-Monte Carlo further improves its performance in high dimensions.

Keywords: Nested simulation, Multilevel Monte Carlo, Risk estimation, Preintegration
\end{abstract}

\section{Introduction}
Risk estimation is a basic and important task in financial risk management. It is used in many real financial activities, such as portfolio investment, option pricing and bank risk control. The goal of risk estimation is to calculate the probability of extreme loss events for a financial portfolio accurately and quickly. In most practical cases, this calculation can be transformed into estimating the expectation of a function of a conditional expectation. For this kind of problem, nested Monte Carlo simulation \cite{ broadie2011_nested, nested_bias2010} is the most commonly used method, because it is easy to implement and can adapt to complex financial models.

Nested Monte Carlo simulation works with two layers of sampling: the outer layer generates samples of the multidimensional random variable, and the inner layer generates samples to estimate the conditional expectation for each outer sample. After getting the conditional expectation estimate for all outer samples, we aggregate these results to get the final estimate of the target risk measure.  Gordy and Juneja \cite{nested_bias2010} showed that to achieve a root mean squared error (RMSE) for given  tolerance $\text{TOL}$, allocating proper computational effort in each level results in a total  cost of $O(\text{TOL}^{-3})$. Nested simulation can be made more efficient by allocating computational effort nonuniformly across scenarios. Particularly, Broadie et al. \cite{broadie2011_nested} proposed an adaptive procedure to allocate computational effort to inner simulations. The resulting nonuniform nested simulation estimator enjoys a reduced complexity of $O(\text{TOL}^{-5/2})$ under certain conditions.

To reduce the computational cost of nested Monte Carlo simulation, Multilevel Monte Carlo (MLMC) \cite{giles_2008_mlmc, giles_2015_mlmc, heinrich1998} has been developed and widely used in recent years. MLMC is a powerful variance reduction technique. It decomposes the target expectation into a sum of differences between estimates at different accuracy levels. For high-accuracy levels with high sampling cost, MLMC uses fewer samples, and for low-accuracy levels with low cost, it uses more samples. This optimal sample allocation greatly reduces the total computational cost compared with the standard Monte Carlo method. Recently, Giles and Haji-Ali \cite{giles_2019_mlmc} proposed to use MLMC in nested simulation for risk estimation. They showed that in the original MLMC, the complexity is $O(\text{TOL}^{-5/2})$. By incorporating the adaptive allocations procedure of \cite{broadie2011_nested}, Giles and Haji-Ali \cite{giles_2019_mlmc} showed that the complexity of MLMC can be reduced to $O(\text{TOL}^{-2}(\log (\text{TOL}))^2)$ under certain conditions. For some applications of nested MLMC, we refer to \cite{bujok2015multilevel,giles2018mlmc,goda_nested_max,goda_nested_log,goda2018decision}.

However, the standard MLMC still cannot solve the problem of the indicator function's discontinuity in risk estimation. This discontinuity makes the standard MLMC have poor robustness: its variance decays very slowly at deep high-accuracy levels, and the kurtosis increases sharply with the level. As a result, the standard MLMC still has limited efficiency in practical risk estimation tasks.
To address the discontinuity problem of the indicator function, preintegration \cite{achtsis2013conditionalB, achtsis2013conditionalA, bayer2018smoothing, glasserman2001conditioning, griewank2018high} (is also called conditional MC or conditional sampling) techniques have become a research focus in recent years. Preintegration is one of these effective smoothing techniques. It integrates out one of the random variables analytically or numerically, which can eliminate the discontinuity caused by the indicator function. What's more, preintegration is easy to combine with the existing MLMC framework, and it does not change the original bias of the nested simulation estimator. This means that preintegration can improve the simulation's convergence and robustness without increasing the bias, which makes it a perfect match for MLMC in risk estimation.

In this paper, we combine the preintegration technique with nested MLMC to develop a new efficient method for risk estimation. We start with the basic problem setting of nested simulation for risk estimation, and review the asymptotic bias property of the standard nested Monte Carlo estimator from existing research \cite{nested_bias2010}. We then introduce the preintegration technique in detail: we define the preintegrated function for the risk measure. When a closed-form explicit expression for the function after preintegration does not exist, Bayer et al. \cite{bayer2023numerical,bayer2024multilevel} proposed a numerical smoothing method to approximate the preintegration via Newton iteration and one-dimensional numerical quadrature schemes. Bayer et al. use the Newton iteration method to approximate the discontinuity point, and apply the two-sided Laguerre quadrature rule to compute the one-dimensional preintegration numerically.

After building the smoothed estimator with preintegration, we combine it with the MLMC method to construct the nested MLMC estimator with preintegration. Then we conduct a comprehensive and detailed theoretical analysis for the proposed method. First, we decompose the mean squared error (MSE) of the proposed estimator into three parts: the bias error of the nested simulation, the statistical error of the MLMC method, and the numerical error from preintegration. We analyze the convergence rate of each error part separately. Second, we prove the strong convergence of the proposed method, and derive that the variance of the level differences decays at a rate of $O(m_\ell^{-1})$, where $m_\ell$ is the inner sample size at level $\ell$. Third, we analyze the computational complexity of the method, and show that it has a complexity of $O(\text{TOL}^{-2}(\log(\text{TOL}))^2)$. Fourth, we prove that the kurtosis of the proposed method is bounded (i.e., $O(1)$), which means the method has good robustness and avoids the high-kurtosis problem of the standard MLMC.

The numerical experiment results confirm our theoretical conclusions. The smoothed MLMC with preintegration has more stable weak convergence than the standard MLMC, and its variance decays fast and nearly achieves our theoretical convergence rate. The kurtosis of the proposed method remains at a low level and almost does not change with the level, which means it has excellent robustness. In terms of computational cost, the proposed method needs much less cost to achieve the same accuracy, and its cost attained theoretical complexity $O(\text{TOL}^{-2}(\log(\text{TOL}))^2)$. 
 We also extend the proposed method by combining it with quasi-Monte Carlo \cite{caflisch1998monte, caflisch1997valuation, dick2010digital, niederreiter1992, owen2023practical} (QMC) to solve the accuracy decline problem of inner simulation in high-dimensional cases, and test the performance of  Multilevel QMC \cite{goda2018decision, xzh} (MLQMC) combined with preintegration (smoothed MLQMC) in high dimensions.
 Smoothed MLQMC significantly improves the computational accuracy in high-dimensional settings, and its strong convergence rate even reaches $-3/2$ in our experiments. Furthermore, combined with antithetic sampling \cite{bujok2015multilevel, goda_nested_max, giles_2019_mlmc}, the above methods achieve further variance reduction and even higher strong convergence rates, thereby effectively reducing the computational complexity.

This paper makes three main contributions to the research of risk estimation with Monte Carlo methods. First, we successfully combine the preintegration smoothing technique with nested MLMC, and construct a new smoothed MLMC estimator for risk estimation. Second, we conduct a complete theoretical analysis for the proposed method, including error decomposition, strong convergence, complexity and robustness, and derive clear theoretical results about convergence rate, variance decay and complexity. Third, we design detailed numerical experiments to verify the theoretical results, and show that the proposed method has obvious advantages over the standard MLMC in practical risk estimation, especially in high-dimensional cases.

The rest of the paper is organized as follows. Section \ref{sec:Nested simulation and preintegration} introduces the basic setting of nested simulation for risk estimation and the preintegration technique. Section \ref{sec:MLMC combined with preintegration} combines preintegration with MLMC, constructs the proposed estimator, and does the detailed theoretical analysis (error, convergence, complexity and robustness). Section \ref{sec:Numerical experiments} presents the numerical experiments and the corresponding results analysis. Section \ref{sec:conclusion} gives the main conclusions of the paper and points out the directions for future research. The Appendix \ref{sec:Appendix} provides the detailed proofs of the lemmas used in the theoretical analysis.

\section{Nested simulation and preintegration}
\label{sec:Nested simulation and preintegration}

\subsection{Problem setting and nested simulation}

In risk estimation, nested Monte Carlo methods are widely adopted to solve the problem of estimating expectations of functions of conditional expectations. We consider the problem of estimating
\begin{equation}
\theta := \mathbb{P} (\mathbb{E}[X|\bm \omega]>c)=\mathbb{E}[\mathbb{I} \{\mathbb{E}[X|\bm \omega]>c\}]:=\mathbb{E}[g(\varphi(\bm \omega))]\label{eq:theta}
\end{equation}
via simulation for a given constant $c$,
where $\varphi (\bm \omega):=\mathbb{E}[X|\bm \omega]$ and $g(\cdot) = \mathbb{I}\{\cdot>c\}$. The inner expectation of the one-dimensional random variable $X$, is conditional on the value of the outer multidimensional random variable $\bm \omega$. 
We aim to estimate the expectation of a function of a conditional expectation via nested simulation, focusing on the problem given in \cref{eq:theta}. In the nested Monte Carlo approach, we consider the estimator
\begin{equation*}
\hat \theta_{n,m} := \frac 1 n \sum_{i=1}^n \hat g_m(\bm \omega_i) ,
\end{equation*}
where
\begin{equation}\label{eq:hat_g_varphi}
\hat g_m(\y) = \mathbb{I} \{\hat \varphi_m(\y) > c\}, \hat \varphi_m(\y) = \frac 1 m\sum_{j=1}^m X_j(\y),
\end{equation}
where $\bm \omega_i$ are independent and identically distributed (i.i.d.) replications of $\bm \omega$, and $X_j(\y)$ are i.i.d. replications of $X$ given $\bm \omega=\y$. The quadrature rule $\hat \varphi_m(\y)$ is used to estimate the conditional expectation $\varphi(\y)=\mathbb{E}[X|\bm \omega=\y]$ in the inner simulation.

The bias of the nested Monte Carlo estimator is a critical factor affecting estimation accuracy, and the following theorem characterizes its asymptotic bias behavior, which is from Proposition 1 in \cite{nested_bias2010}.
\begin{theorem}\label{thm:bias}
	 Let $f(\cdot)$ be the probability density function of $\varphi(\bm \omega)$. Assume the following:
\begin{itemize}
	\item The joint density $p_m(x,y)$ of $\varphi(\bm\omega)$ and $m^{1/2}[\hat{\varphi}_m(\bm\omega)-\varphi(\bm\omega)]$ and partial derivatives $(\partial /\partial x)p_m(x,y)$ and $(\partial^2 /\partial x^2)p_m(x,y)$ exist for each $m$ and $(x,y)$.
	\item For each $m\ge 1$, there exist functions $f_{i,m}(\cdot)$ such that
\begin{equation*}
\bigg|\frac{\partial^i }{\partial x^i}p_m(x,y)\bigg|\le f_{i,m}(y), i=0,1,2.
\end{equation*}
	In addition,
\begin{equation*}	
\sup_{m\ge 1} \int \abs{y}^r f_{i,m}(y) d y<\infty,\ for\ i=0,1,2\ and\ 0\le r\le 4.
\end{equation*}
\end{itemize}
Then the bias of the nested estimator $\hat \theta_{n,m}$ asymptotically satisfies
\begin{equation}\label{eq:bias}
|\mathbb{E}[\hat \theta_{n,m}-\theta] |=|\mathbb{E}[\hat g_m(\bm \omega)-\theta]|\le  \frac{|\Theta'(c)|}{m} + O(m^{-3/2}),
\end{equation}
where
\begin{equation*}
\Theta(c)=\frac 1 2 f(c)\mathbb{E}[\sigma^2(\bm\omega)|\varphi(\bm\omega)=c],
\end{equation*}
and $\sigma^2(\y)$ denotes the conditional variance of $X$ (conditioned on $\bm\omega = \y$).
\end{theorem}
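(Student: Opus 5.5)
Write $Z_m := m^{1/2}[\hat\varphi_m(\bm\omega)-\varphi(\bm\omega)]$, so that $p_m(x,y)$ is the joint density of $(\varphi(\bm\omega),Z_m)$. The estimator $\hat\theta_{n,m}$ is an average of i.i.d. copies of $\hat g_m(\bm\omega)$, hence $\mathbb{E}[\hat\theta_{n,m}]=\mathbb{E}[\hat g_m(\bm\omega)]$ and the first equality in \eqref{eq:bias} is immediate. The plan is to write the bias as an integral of $p_m$ over the region between the true threshold and the perturbed threshold, and then Taylor expand in $x$ around $c$.

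\textbf{Step 1 (bias as a density integral).} Since $\hat\varphi_m=\varphi+m^{-1/2}Z_m$,
\begin{equation*}
\mathbb{E}[\hat g_m(\bm\omega)]-\theta=\mathbb{P}(\varphi+m^{-1/2}Z_m>c)-\mathbb{P}(\varphi>c)
=\int\!\!\int_{c-m^{-1/2}y}^{c} p_m(x,y)\,dx\,dy ,
\end{equation*}
where the inner integral is oriented, i.e. negative when $y<0$.

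\textbf{Step 2 (Taylor expansion in $x$).} For each $y$, expand $p_m(x,y)$ around $x=c$ to second order with Lagrange remainder, which is allowed by the first assumption. Integrating over the interval of length $m^{-1/2}y$ gives
\begin{equation*}
\int_{c-m^{-1/2}y}^{c}p_m(x,y)\,dx=m^{-1/2}y\,p_m(c,y)-\tfrac12 m^{-1}y^2\,\partial_x p_m(c,y)+R_m(y),
\end{equation*}
with $|R_m(y)|\le \tfrac16 m^{-3/2}|y|^3 f_{2,m}(y)$. Integrate in $y$; Fubini and the interchanges are justified by the domination bounds with $r\le 3$. The remainder is then uniformly $O(m^{-3/2})$.

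\textbf{Step 3 (identifying the terms).} Conditional on $\bm\omega$, $Z_m$ has mean zero because $\hat\varphi_m$ is unbiased given $\bm\omega$. Hence $\int y\,p_m(c,y)\,dy=f(c)\,\mathbb{E}[Z_m\mid\varphi=c]=0$, and the $m^{-1/2}$ term vanishes. For the second term, $\int y^2 p_m(x,y)\,dy=f(x)\,\mathbb{E}[Z_m^2\mid\varphi=x]=f(x)\,\mathbb{E}[\sigma^2(\bm\omega)\mid\varphi(\bm\omega)=x]=2\Theta(x)$, using $\mathbb{E}[Z_m^2\mid\bm\omega]=\sigma^2(\bm\omega)$ for i.i.d. inner samples. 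Differentiating under the integral sign, justified by $f_{1,m}$ with $r=2$, gives $\int y^2\,\partial_x p_m(c,y)\,dy=2\Theta'(c)$. Therefore
\begin{equation*}
\mathbb{E}[\hat g_m]-\theta=-\frac{\Theta'(c)}{m}+O(m^{-3/2}),
\end{equation*}
and taking absolute values yields \eqref{eq:bias}.

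\textbf{Main obstacle.} The delicate part is the rigor of Step 3. We must confirm that the conditional expectations given $\varphi=x$ are well defined through the joint density. We also need $\Theta$ to be differentiable, which follows from dominated differentiation using $f_{1,m}$. Finally, all remainders must be uniform in $m$, which is exactly what the $\sup_m$ moment conditions with $r\le 4$ provide.
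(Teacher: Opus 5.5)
Your proof is correct and is essentially the argument behind Proposition 1 of Gordy and Juneja \cite{nested_bias2010}, which the paper cites rather than reproving: you write the bias as the integral of $p_m$ over the oriented strip between $c-m^{-1/2}y$ and $c$, Taylor expand in $x$ to second order with an $f_{2,m}$-controlled remainder, kill the $m^{-1/2}$ term by conditional unbiasedness of $\hat\varphi_m$, and identify the $m^{-1}$ term as $-\Theta'(c)/m$. The point-evaluation issue you flag is routine to close: the domination by $|y|^r f_{i,m}(y)$ makes $x\mapsto\int y^r p_m(x,y)\,dy$ continuous, and for $r=2$ also differentiable. Hence the a.e.\ identities $\int y\,p_m(x,y)\,dy=0$ and $\int y^2 p_m(x,y)\,dy=2\Theta(x)$ hold at $x=c$, provided you take the continuous version of $\Theta$.
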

This theorem provides an explicit asymptotic upper bound for the bias of the nested Monte Carlo estimator $\hat \theta_{n,m}$. It demonstrates that the leading term of the bias decays at the rate of $1/m$. 

\subsection{Preintegration}
The efficiency of nested Multilevel Monte Carlo (MLMC) simulation is often significantly degraded by the discontinuity of the indicator function $g(\cdot)$, which slows down the strong convergence rate in practical risk estimation. To avoid this issue and improve the accuracy of nested MLMC simulation, we introduce the preintegration technique, which integrates out one of the outer random variables analytically or numerically, and then the discontinuity can be effectively handled and the overall efficiency can be greatly enhanced.In this paper, for simplicity,  we assume that the variable $\omega_1$ has an independent probability density function $\rho_1$, so we always perform preintegration over $ \omega_1$. The specific procedure of inner simulation combined with preintegration is performed
by the following formula
\begin{align*}
\theta  &= \mathbb{E}[\mathbb{I} \{\varphi (\bm \omega)>c\}] =\mathbb{E}[g(\varphi(\bm \omega))] \\
&= \mathbb{E}[\int_{\mathbb{R}} g(\varphi(\omega_1, \bm \omega_{-1}))\rho_1(\omega_1) d\omega_1] := \mathbb{E}[h(\bm \omega_{-1})] \\
&\approx \mathbb{E}[\int_{\mathbb{R}} \hat g_m(\omega_1, \bm \omega_{-1})\rho_1(\omega_1) d\omega_1] := \mathbb{E}[\hat h_m(\bm \omega_{-1})] ,
\end{align*}
where
\begin{equation*}
    h(\bm \omega_{-1})=\int_{\mathbb{R}} g(\varphi(\omega_1, \bm \omega_{-1}))\rho_1(\omega_1) d\omega_1
\end{equation*}
and
\begin{equation}\label{eq:hat_h_m}
    \hat h_m(\bm \omega_{-1})=\int_{\mathbb{R}} \hat g_m(\omega_1, \bm \omega_{-1})\rho_1(\omega_1) d\omega_1.
\end{equation}

From the above calculations, we can observe that the preintegration does not change the bias of estimator of the nested simulation, and therefore preintegration cannot change the computational complexity of the nested simulation.
It is worth noting that the preintegrated function $\hat h$ generally lacks a closed-form analytical expression, thus requiring numerical approximation of the one-dimensional integral. We follow the numerical smooth method of Bayer et al. \cite{bayer2023numerical, bayer2024multilevel}.
For  the ease of presentation, we make the same assumption as \cite{bayer2024multilevel}, for fixed $\x_{-1}$,  the function $\varphi(x_1,\x_{-1})$  has a simple root or is positive for all $x_1 \in \mathbb{R}$. This  assumption is guaranteed by the following monotonicity condition \ref{condition:a} and    infinite growth condition \ref{condition:b}, which we present the monotonicity condition for an increasing function without  loss of generality.
	
\begin{enumerate}[label = \textbf{Condition}~\Roman*, leftmargin=*]
  \item 
  $\frac{\partial \varphi}{\partial x_1}(\x) >0,\: \forall \:  \x \in \mathbb{R}^d \: \:\textbf{(Monotonicity condition)}$. \label{condition:a}
   \item  $ \underset{x_1 \rightarrow +\infty}{\lim} \varphi(\x)=\underset{x_1 \rightarrow +\infty}{\lim} 
        \varphi(x_1,\x_{-1})=+\infty, \: \forall \: \x_{-1} \in \mathbb{R}^{d-1}\: \\
        \text{or} \:\: \frac{\partial^2 \varphi} {\partial x_1^2}(\x) \ge 0,\: \forall \: \x \in \mathbb{R}^{d}  \: \:   \textbf{(Growth condition)}$.\label{condition:b}
\end{enumerate}

Under the above conditions, for any fixed $\bm \omega_{-1}$, we can uniquely determine the one-dimensional discontinuity point $y^{\ast}_1$ by solving the equation
\begin{equation*}\label{eq:roots_function}
		\varphi(y^{\ast}_1, \bm \omega_{-1}) = 0.
	\end{equation*}
We employ the Newton iteration method to determine the approximated discontinuity location $\bar{y}^\ast_1$.

The second step of the numerical smoothing framework which is presented is to perform numerical preintegration based on the approximated discontinuity point, which is formulated as follows:
	\begin{equation*}
	    \mathbb{E}[g(\varphi(\bm \omega)] = \mathbb{E}[h(\bm \omega_{-1})]\approx	\mathbb{E}[\hat h(\bm \omega_{-1})] 
		\approx \mathbb{E}[\bar h(\bm \omega_{-1})],
	\end{equation*}

where
	\begin{align}\label{eq:smooth_function_after_pre_integration}
		\hat h_m(\bm \omega_{-1}) &=\int_{\mathbb{R}} \hat g_m(\omega_1, \bm \omega_{-1}) \rho_{1}(\omega_1) d\omega_1 \nonumber\\
		&= \int_{-\infty}^{y^\ast_1} \hat g_m(\omega_1, \bm \omega_{-1}) \rho_{1}(\omega_1) d\omega_1 + \int_{y_1^\ast}^{+\infty} \hat g_m(\omega_1, \bm \omega_{-1}) \rho_{1}(\omega_1) d\omega_1,
	\end{align}
and $\bar{h}$ is the  approximation of $\hat h$  obtained using the Newton iteration and  a two-sided Laguerre quadrature rule, which  is expressed as follows:
	\begin{equation}\label{eq:expression_h_bar}
		\bar h_m(\bm \omega_{-1}) := \sum_{k=0}^{M_{\text{Lag}}} \eta_k \; \hat g_m\left( \zeta_k\left(\bar{y}^{\ast}_1\right),
		\bm \omega_{-1}   \right),
	\end{equation}
where $M_{\text{Lag}}$ represents the number of Laguerre quadrature points  $\zeta_k \in \R$ with $\zeta_0 = \bar{y}^{\ast}_1$ and corresponding weights $\eta_k$.
The quadrature points $\zeta_k$ must be systematically selected based on the approximated discontinuity point $\bar{y}^{\ast}_1$ to ensure numerical accuracy.

Equations~\eqref{eq:smooth_function_after_pre_integration} and \eqref{eq:expression_h_bar} can be easily extended to the case  in which  finitely  many discontinuities exist. We refer to Remark~2.5 presented in \cite{bayer2023numerical} for this extension.

\section{MLMC combined with preintegration}\label{sec:MLMC combined with preintegration}
MLMC method is a powerful variance reduction technique that decomposes the target expectation into a telescoping sum of level differences, significantly reducing computational cost. In this section, we first review the fundamental idea of MLMC, then construct the MLMC estimator with preintegration technique proposed in the previous section. We recall that $ h(\bm \omega_{-1})=\int_{\mathbb{R}} g(\omega_1, \bm \omega_{-1})\rho_1(\omega_1) d\omega_1 $. Instead of dealing with $h$ directly, we consider a sequence of random variables $h_{m_0},h_{m_1},\dots$ with increasing approximation accuracy to $h$ but with increasing cost per sample. The approximation accuracy improves with the increase of inner sample size, while the computational cost per level also rises accordingly. The $\ell$th level approximation of $h$ is $\hat h_{m_{\ell}}(\bm \omega_{-1}) = \int_{\mathbb{R}} \hat g_{m_{\ell}}(\omega_1, \bm \omega_{-1})\rho_1(\omega_1) d\omega_1$, where $m_\ell=2^{\ell+\ell_0}$ in our setting and $\ell_0\ge 0$. We simplify the  subscript notation $m_{\ell}$ to $\ell$, like $h_{m_{\ell}}$ to $h_\ell$. Moreover,  $M_{\text{Lag},\ell}$ and  $\text{TOL}_{\text{Newton},\ell}$  denote the number of Laguerre quadrature points  and  the tolerance of the Newton method at the level $\ell$, respectively. 

Due to the linearity of expectation, we have the following telescoping representation
\begin{equation*}
\mathbb{E}[\hat h_L] = \mathbb{E}[\hat h_0] + \sum_{\ell=1}^L \mathbb{E}[\hat h_{\ell}-\hat h_{\ell-1}].
\end{equation*}
Let $\hat Y_\ell=\hat h_{\ell}-\hat h_{\ell-1}$ for $\ell \ge 1$ and $\hat Y_0=\hat h_0$, we further have
\begin{equation}\label{eq:sumeq}
\mathbb{E}[\hat h_L] = \sum_{\ell=0}^L \mathbb{E}[\hat Y_\ell].
\end{equation}
The MLMC method uses the above equality and estimates each term on the right hand side of \cref{eq:sumeq} independently. This independent estimation strategy allows us to allocate samples optimally across different levels. The resulting MLMC estimator is given by
\begin{equation}\label{eq:hatQ}
    \hat Q = \sum_{\ell=0}^L \hat Q_\ell
\end{equation}
with
$$\hat Q_\ell = \frac{1}{N_\ell}\sum_{i=1}^{N_\ell}\hat Y_\ell^{(i)},\: 0 \le \ell \le L,$$
where $\hat Y_\ell^{(1)},\dots,\hat Y_\ell^{(N_\ell)}$ are i.i.d. replications of $\hat Y_\ell$ for $\ell=0,\dots, L$. 

Similarly, let $\bar Y_\ell=\bar h_{\ell}-\bar h_{\ell-1}$ for $\ell \ge 1$ and $\bar Y_0=\bar h_0$, we can construct the MLMC estimator with numerical preintegration
\begin{equation}\label{eq:barQ}
		\bar{Q}:= \sum\limits_{\ell=0}^{L} \bar{Q}_{\ell},
\end{equation}
where
\begin{equation*}
 	\bar{Q}_{\ell}:= \frac{1}{N_{\ell}}  \sum_{i=1}^{N_\ell}\bar Y_\ell^{(i)}, \: 0 \le \ell \le L,
\end{equation*}
where $\bar Y_\ell^{(1)},\dots,\bar Y_\ell^{(N_\ell)}$ are i.i.d. replications of $\bar Y_\ell$ for $\ell=0,\dots, L$.

\subsection{Error analysis}
This Section  analyzes  the different error contributions in our approach that combines  the MLMC   estimator  with the numerical preinteration to approximate $\theta = \mathbb{E}[g(\varphi(\bm \omega))]$ with $g,\varphi$ given by \cref{eq:theta}. We obtain  the  following MSE decomposition
\begin{align}\label{eq: error decomposition MLMC}
\mathbb{E}\left( \mathbb{E}[g(\varphi(\bm \omega))]-\bar{Q} \right)^2 &\leq \underset{\text{Error I: bias or weak error}}{3\underbrace{\left(\mathbb{E}[g(\varphi(\bm \omega))]-  \mathbb{E}[\hat g_L(\bm \omega)]\right)^2}}\nonumber\\
&+\underset{\text{Error II:  MLMC  statistical error}} {3\underbrace {\mathbb{E}\left(\mathbb{E}[\hat h_L(\bm \omega_{-1})]-\hat Q\right)^2}}\nonumber\\
&+ \underset{\text{Error III: numerical integration and root-finding error}}{3\underbrace{\mathbb{E}\left(\hat{Q}-\bar Q\right)^2}},
\end{align} 
where $\hat h_L$ is given by \cref{eq:hat_h_m}, and $\hat Q$ and $\bar Q$ are given by \cref{eq:hatQ} and \cref{eq:barQ} respectively.

We first analyze Error I, which corresponds to the weak error/bias of the nested estimator. Due to Theorem \ref{thm:bias},    we obtain
\begin{equation}\label{eq:ErrorI}
\text{Error I}=O( m^{-2}_L).
\end{equation}
 
For simplification, we assume	that  on all levels ($0 \le \ell \le L$)   $M_{Lag,\ell}=M_{Lag,L}$ and $\text{TOL}_{\text{Newton},\ell}=\text{TOL}_{\text{Newton},L}$, and then the Error III can compute by
	\begin{align}\label{eq:ErrorIII}
		\text{Error III}&=\mathbb{E}\left(\hat{Q}-\bar Q\right)^2
        =\mathbb{E}\left(\sum_{\ell=0}^L \frac{1}{N_\ell}\sum_{n_\ell=1}^{N_\ell}\left((\hat h_\ell - \hat h_{\ell-1})-(\bar h_\ell-\bar h_{\ell-1})\right)\right)^2\nonumber\nonumber\nonumber\nonumber\\
        &\leq (L+1) \sum_{\ell=0}^L \frac{1}{N_\ell}\sum_{n_\ell=1}^{N_\ell}\mathbb{E}\left((\hat h_\ell - \hat h_{\ell-1})-(\bar h_\ell-\bar h_{\ell-1})\right)^2\nonumber\nonumber\nonumber\\
        &\leq 2(L+1) \sum_{\ell=0}^L \frac{1}{N_\ell}\sum_{n_\ell=1}^{N_\ell}\left(\mathbb{E}(\hat h_\ell - \bar h_{\ell})^2+\mathbb{E}(\hat h_{\ell-1}-\bar h_{\ell-1})^2\right)\nonumber\nonumber\\
		&= O(M_{\text{Lag},L}^{-s})+ O(\text{TOL}^2_{\text{Newton},L}),
	\end{align}
where   $s>0$ is related to the degree of regularity of the integrand, $G$, w.r.t.~$y_1$, and the error bound in \cref{eq:ErrorIII} see \cite{bayer2023numerical, bayer2024multilevel} for details.

Error II represents the statistical error of the MLMC estimator, which is determined by the sample variance and the number of samples at each level. From the standard multilevel error analysis,  we obtain
	\begin{equation}\label{eq:ErrorII}
		\text{Error II} = \sum_{\ell=0}^L {(N_{\ell})}^{-1} \hat V_{\ell},
	\end{equation}
where
\begin{equation*}
	\hat V_{0}:=\text{Var}\left[\hat Y_{0}\right], \:  \hat V_{\ell}:=\text{Var}\left[\hat Y_{\ell}\right], \: 1 \le \ell \le L,
\end{equation*}
and  $C_{\ell}$ is the cost per sample per level, given by
\begin{equation}\label{eq:C_l}
C_{\ell} \propto m_{\ell} (M_{\text{Lag},\ell}+N_{\text{iter},\ell})\propto m_{\ell} \left(M_{\text{Lag},\ell}+\log \left(\text{TOL}_{\text{Newton},\ell}^{-1}\right)\right), \: 0 \le \ell \le L,
\end{equation}
where $N_{\text{iter},\ell}$ is the number of the Newton iterations at level $\ell$. Under some mild conditions and using Taylor expansion, we can  show that Newton iteration has a second order convergence and conclude that $N_{\text{iter},\ell} \propto \log \left(\text{TOL}_{\text{Newton},\ell}^{-1}\right)$ \cite{bayer2024multilevel}.
The variance sequence $\{\hat V_\ell\}$ is the core parameter for MLMC sample allocation, and the next section establishes its decay rate, Theorem \ref{thm:var}  derive  estimates of the variances  $\{\hat V_{\ell}\}_{\ell=0}^{L}$, and show that $\hat V_{\ell}=O(m^{-1}_{\ell})$.
	
Finally, using  \eqref{eq: error decomposition MLMC}, \eqref{eq:ErrorI}, \eqref{eq:ErrorIII} and \eqref{eq:ErrorII}, the total MSE estimate of our approach is
	\begin{align}\label{eq:total_error_estimate MLMC}
		&\mathbb{E}(\mathbb{E}[g(\varphi(\bm \omega))]-\bar{Q})^2\nonumber\\
		&= O(m^{-2}_{L})+O\left(\sum_{\ell=0}^L {(N_{\ell})}^{-1} \hat V_{\ell}\right)+O(M_{\text{Lag},L} ^{-s})+ O(\text{TOL}^2_{\text{Newton},L}).
	\end{align}
\subsection{Strong convergence results for MLMC with preintegration}
In this section, we derive the strong convergence rate of our main work: the MLMC method with the preintegration technique. Before proving our main theorem, we require the following assumptions and definition. We first introduce the following definition for uniform bounded random variables.
\begin{definition}\label{def:O1}
	For sequences of rdvs $F_N$, we write  that
	$F_N = \mathcal{O}(1)$ if there exists a rdv $C$ with finite moments
	of all orders,  such that for all $N$, we have $\abs{F_N} \le C $ a.s.
\end{definition}
We impose the following regularity assumptions on the conditional expectation function and its derivatives.
\begin{assumption}\label{assume:varphi}
Suppose that the function $\varphi(\bm \omega) = \mathbb{E}[X|\bm \omega]$ in \cref{eq:theta} satisfies 
\begin{equation*}
    \varphi(\omega_1,\bm \omega_{-1}) = \mathcal{O}(1), \partial_{\omega_1} \varphi(\omega_1,\bm \omega_{-1}) = \mathcal{O}(1)\ \text{and}\ \partial^2_{\omega_1}\varphi(\omega_1,\bm \omega_{-1}) = \mathcal{O}(1)  .
\end{equation*}
\end{assumption}

\begin{assumption}\label{assume:varphi_1}
Suppose that $\hat\varphi_{\ell}=\hat \varphi_{m_\ell}$ in \cref{eq:hat_g_varphi} satisfies
\begin{equation*}
    \left(\partial_{\omega_1} \hat \varphi_{\ell}(\omega_1,\bm \omega_{-1})\right)^{-1} = \mathcal{O}(1).
\end{equation*}
\end{assumption}
We also assume the score function of the marginal density satisfies an integrability condition.
\begin{assumption}\label{assume:score}
    Suppose that the function $s(y) := \frac{\rho'_1(y)}{\rho_1(y)}$ of the p.d.f. $\rho_1$ exists and satisfies
\begin{equation*}
    \int_{\mathbb{R}} (1+s(y))^p \rho_1(y) dy < \infty.
\end{equation*}
\end{assumption}
The following classical moment inequality for sample averages is essential for our error estimation.
\begin{lemma}\label{lemma:MC_p}
Let $X$ be a random variable with zero mean, and let $\bar{X}_N$ be an average of $N$ i.i.d. samples of $X$. If $\mathbb{E}[|X|^p]$ is finite for $p \geq 2$, there exists a constant $C_p$ depending only on $p$ such that
\[
\mathbb{E}\bigl[|\bar{X}_N|^p\bigr] \leq C_p \frac{\mathbb{E}[|X|^p]}{N^{p/2}}.
\]
\end{lemma}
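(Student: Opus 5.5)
The plan is the classical Marcinkiewicz--Zygmund / Rosenthal route, reducing the general case to the elementary case $p=2$ via a square-function bound. Write $\bar X_N = \frac1N S_N$ with $S_N=\sum_{i=1}^N X_i$, where the $X_i$ are i.i.d. copies of $X$ with $\mathbb{E}[X]=0$. The claim is then equivalent to
\[
\mathbb{E}|S_N|^p \le C_p N^{p/2}\,\mathbb{E}|X|^p .
\]
For $p=2$ this is immediate with $C_2=1$: independence and zero mean kill all cross terms, so $\mathbb{E}[S_N^2]=N\,\mathbb{E}[X^2]$.

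\textbf{Step 1: symmetrization.} Let $X_i'$ be an independent copy of the sample and let $\epsilon_i$ be i.i.d. Rademacher signs, independent of everything else. Since $\mathbb{E}[X_i']=0$, Jensen's inequality applied conditionally on the $X_i$ gives
\[
\mathbb{E}|S_N|^p \le \mathbb{E}\Bigl|\sum_i (X_i - X_i')\Bigr|^p .
\]
The variables $X_i-X_i'$ are symmetric, so $\sum_i (X_i-X_i')$ has the same law as $\sum_i \epsilon_i (X_i-X_i')$.

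\textbf{Step 2: Khintchine's inequality.} Condition on the $X$'s. Khintchine's inequality bounds the conditional $p$-th moment by
\[
B_p\Bigl(\sum_i (X_i-X_i')^2\Bigr)^{p/2}.
\]
Taking expectations yields $\mathbb{E}|S_N|^p \le B_p\,\mathbb{E}\bigl(\sum_i Z_i^2\bigr)^{p/2}$ with $Z_i = X_i - X_i'$.

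\textbf{Step 3: bounding the square function.} This is where the growth rate $N^{p/2}$ appears. Since $p/2\ge 1$, the map $t\mapsto t^{p/2}$ is convex, so Jensen/H\"older applied to the normalized sum gives
\[
\Bigl(\sum_{i=1}^N Z_i^2\Bigr)^{p/2} = N^{p/2}\Bigl(\frac1N\sum_{i=1}^N Z_i^2\Bigr)^{p/2} \le N^{p/2-1}\sum_{i=1}^N |Z_i|^p .
\]
Taking expectations gives $N^{p/2}\,\mathbb{E}|Z|^p$. Finally, $\mathbb{E}|X-X'|^p \le 2^{p}\,\mathbb{E}|X|^p$ by the elementary bound $|a+b|^p\le 2^{p-1}(|a|^p+|b|^p)$.

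Combining the three steps gives the claim with $C_p = 2^p B_p$, which depends only on $p$. Dividing by $N^p$ converts the bound on $S_N$ into the stated bound on $\bar X_N$.

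The step needing the most care is Step 2: we must invoke Khintchine's inequality with a constant $B_p$ that is independent of $N$. I would cite it as a standard result.

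If a self-contained argument is preferred, the alternative is Burkholder's inequality for the martingale $S_k$, which yields the same square-function bound directly. When $p$ is an even integer, one can also simply expand $\mathbb{E}[S_N^p]$ and count the terms in which no index appears exactly once.
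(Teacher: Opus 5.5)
Your proof is correct; it differs from the paper only in how you obtain the square-function bound. The paper gives no argument of its own and defers to \cite{goda_nested_max}. There, the bound $\mathbb{E}|S_N|^p \le C_p\,\mathbb{E}\bigl(\sum_{i=1}^N X_i^2\bigr)^{p/2}$ is taken directly from the discrete Burkholder--Davis--Gundy inequality for the martingale $S_n=\sum_{i\le n}X_i$, which is the alternative you mention at the end. It is then followed by exactly your Step~3 convexity estimate $\bigl(\sum_i X_i^2\bigr)^{p/2}\le N^{p/2-1}\sum_i |X_i|^p$. You instead reach the square-function bound by symmetrization plus Khintchine's inequality, i.e.\ the classical Marcinkiewicz--Zygmund argument.

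The BDG route is a one-liner and needs no symmetrization. It uses only that the $X_i$ form a martingale-difference sequence, so it also covers dependent or non-identically distributed summands with uniformly bounded $p$-th moments; the price is citing a comparatively deep martingale inequality. Your route uses only independence and the elementary Khintchine inequality for Rademacher sums (which can itself be proved by expanding even moments), and it produces the explicit constant $C_p=2^pB_p$. The extra factor $2^p$ from symmetrization is harmless, since the paper only ever uses the $N^{-p/2}$ rate with some $p$-dependent constant. In both proofs the rate enters at the same place: the Jensen step on the normalized square function.
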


\begin{proof}
See the proof of \cite{goda_nested_max}.
\end{proof}

Based on the above assumptions and moment inequality, we derive the moment bounds for the level difference error.
\begin{lemma}\label{lemma:e_l}
Suppose that Assumption \ref{assume:varphi} is satisfied and therefore for any $p\geq2$ 
\begin{equation*}
    \mathbb{E}[\varphi^{p}(\bm \omega)] < \infty\  \text{and}\  \mathbb{E}[(\partial_{\omega_1}\varphi)^{p}(\bm \omega)] < \infty .
\end{equation*}
Then we have
	\begin{equation}
			\mathbb{E}[e_{\ell}^{2p}(\bm \omega)]= O(m_{\ell}^{-p}), \label{eq:e_l} 
	\end{equation}
and   
\begin{equation}
	\mathbb{E}[(\partial_{\omega_1}e_{\ell})^{2p}(\bm \omega)]= O(m_{\ell}^{-p}).\label{eq:partial_e_l} 
\end{equation}
\end{lemma}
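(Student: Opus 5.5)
We read $e_\ell(\bm\omega)$ as the inner-sampling error $e_\ell(\bm\omega)=\hat\varphi_\ell(\bm\omega)-\varphi(\bm\omega)=\frac{1}{m_\ell}\sum_{j=1}^{m_\ell}\bigl(X_j(\bm\omega)-\varphi(\bm\omega)\bigr)$. Conditionally on $\bm\omega$, this is an average of $m_\ell$ i.i.d.\ zero-mean random variables. The plan is to condition on $\bm\omega=\y$, apply Lemma~\ref{lemma:MC_p} with exponent $2p$, and then integrate over $\y$, using the moment hypotheses to show that the resulting constant is finite and independent of $\ell$.

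\textbf{Step 1: the bound \eqref{eq:e_l}.} Fix $\bm\omega=\y$ and set $Z(\y)=X(\y)-\varphi(\y)$, which has zero conditional mean. Lemma~\ref{lemma:MC_p} with exponent $2p\ge 2$ gives
\begin{equation*}
\mathbb{E}\bigl[e_\ell^{2p}(\bm\omega)\,\big|\,\bm\omega=\y\bigr]\le C_{2p}\,\frac{\mathbb{E}\bigl[|X-\varphi(\y)|^{2p}\,\big|\,\bm\omega=\y\bigr]}{m_\ell^{p}}.
\end{equation*}
Taking expectation over $\bm\omega$ yields $\mathbb{E}[e_\ell^{2p}]\le C_{2p}\,m_\ell^{-p}\,\mathbb{E}|X-\varphi(\bm\omega)|^{2p}$. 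By the elementary inequality $|a-b|^{2p}\le 2^{2p-1}(|a|^{2p}+|b|^{2p})$, the last expectation is bounded by $2^{2p-1}\bigl(\mathbb{E}|X|^{2p}+\mathbb{E}|\varphi(\bm\omega)|^{2p}\bigr)$. The term $\mathbb{E}|\varphi|^{2p}$ is finite because $\varphi=\mathcal{O}(1)$ in the sense of Definition~\ref{def:O1}. The term $\mathbb{E}|X|^{2p}$ is not literally among the stated hypotheses, so I would add finiteness of all conditional moments of $X$ explicitly as a standing (implicit) assumption.

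\textbf{Step 2: the bound \eqref{eq:partial_e_l}.} Here $X_j(\y)$ must be regarded as a function of $\y$ built with common random numbers, for instance $X_j(\y)=\Psi(\y,U_j)$ with i.i.d.\ $U_j$ independent of $\bm\omega$. Then
\begin{equation*}
\partial_{\omega_1}e_\ell=\frac{1}{m_\ell}\sum_{j=1}^{m_\ell}\bigl(\partial_{\omega_1}\Psi(\bm\omega,U_j)-\partial_{\omega_1}\varphi(\bm\omega)\bigr).
\end{equation*}
If differentiation and conditional expectation can be interchanged, so that $\mathbb{E}[\partial_{\omega_1}\Psi(\y,U)]=\partial_{\omega_1}\varphi(\y)$, each summand has zero conditional mean. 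The argument of Step 1 then applies verbatim. The moment bound uses $\mathbb{E}[(\partial_{\omega_1}\varphi)^{2p}]<\infty$, which follows from Assumption~\ref{assume:varphi}, together with finiteness of $\mathbb{E}|\partial_{\omega_1}\Psi|^{2p}$.

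\textbf{Main obstacle.} The difficulty is Step 2: justifying pathwise differentiability of the inner samples in $\omega_1$ and the interchange $\partial_{\omega_1}\mathbb{E}=\mathbb{E}\,\partial_{\omega_1}$. I would argue this by dominated convergence. The required domination is a $2p$-integrable bound on $\partial_{\omega_1}\Psi$ that is locally uniform in $\omega_1$, which plays the same role for the inner samples that the $\mathcal{O}(1)$ assumption plays for $\varphi$. Once these regularity facts are in place, both \eqref{eq:e_l} and \eqref{eq:partial_e_l} follow from the same two-line computation, with constants independent of $\ell$.
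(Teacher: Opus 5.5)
You follow the paper's route: condition on $\bm\omega$, apply Lemma~\ref{lemma:MC_p} with exponent $2p$, integrate over $\bm\omega$, and handle the derivative ``similarly''. There is one concrete gap, which is the quantity you bound. In the paper, $e_\ell$ is not the inner-sampling error $\hat\varphi_\ell-\varphi$. It is the level difference $e_\ell=\hat\varphi_\ell-\hat\varphi_{\ell-1}$. This is what Theorem~\ref{thm:var} needs: there $z=\hat\varphi_{\ell-1}+\theta e_\ell$, and the mean-value representation of $\tilde g_\delta(\hat\varphi_\ell)-\tilde g_\delta(\hat\varphi_{\ell-1})$ produces exactly this difference. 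So your Step 1 proves the right kind of estimate for a different quantity.

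The fix is the first step of the paper's proof:
\begin{itemize}
\item write $e_\ell=(\hat\varphi_\ell-\varphi)-(\hat\varphi_{\ell-1}-\varphi)$;
\item use $|a-b|^{2p}\le 2^{2p-1}(|a|^{2p}+|b|^{2p})$ and apply your bound at both levels;
\item use $m_{\ell-1}=m_\ell/2$, so that $m_{\ell-1}^{-p}=2^p m_\ell^{-p}$.
\end{itemize}
This works however the two levels are coupled. The same splitting is needed for $\partial_{\omega_1}e_\ell$.

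Your two caveats are well founded, and both are tacit in the paper. First, the paper bounds $\mathbb{E}[|X-\varphi|^{2p}\mid\bm\omega]$ by $2^{2p}\,\mathbb{E}[\varphi^{2p}\mid\bm\omega]$. As written this fails: take $\varphi(\bm\omega)=0$ with $X$ conditionally nondegenerate. It holds only with $|X|^{2p}$ on the right, via conditional Jensen, $|\varphi|^{2p}\le\mathbb{E}[|X|^{2p}\mid\bm\omega]$. Taking expectations then requires $\mathbb{E}|X|^{2p}<\infty$, which is exactly the assumption you add. The same Jensen step shows that your separate appeal to $\mathbb{E}|\varphi|^{2p}<\infty$ is unnecessary.

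Second, the paper's ``similarly'' for \eqref{eq:partial_e_l} silently requires three things:
\begin{itemize}
\item a pathwise (common-random-numbers) representation $X_j(\bm y)=\Psi(\bm y,U_j)$;
\item the interchange $\partial_{\omega_1}\mathbb{E}=\mathbb{E}\,\partial_{\omega_1}$;
\item finite $2p$-th moments of $\partial_{\omega_1}\Psi$.
\end{itemize}
None of these follows from Assumption~\ref{assume:varphi} alone. Your Step 2 makes them explicit, and that is the right way to make the paper's argument rigorous.
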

\begin{proof}
Obviously, under Assumption \ref{assume:varphi}, we can get $ \mathbb{E}[\varphi^{p}(\bm \omega)] < \infty\  \text{and}\  \mathbb{E}[(\partial_{\omega_1}\varphi)^{p}(\bm \omega)] < \infty$ for any $p$.
We define $e_{\ell}(\omega_1 , \bm \omega_{-1}) := \hat{\varphi}_{\ell}(\omega_1 , \bm \omega_{-1})- \hat{\varphi}_{\ell-1}(\omega_1 , \bm \omega_{-1})$, and then we have
\begin{align*}
    \mathbb{E}[e_{\ell}^{2p}(\bm \omega)] &= \mathbb{E}[\hat \varphi_{\ell}(\bm \omega) - \hat \varphi_{\ell - 1}(\bm \omega)]^{2p} \\
    & = \mathbb{E}[(\hat \varphi_{\ell}(\bm \omega) - \varphi(\bm \omega)) - (\hat \varphi_{\ell - 1}(\bm \omega) - \varphi(\bm \omega))]^{2p}\\
    & \leq 2^{2p-1} \mathbb{E}[\hat \varphi_{\ell}(\bm \omega) - \varphi(\bm \omega)]^{2p} + 
     2^{2p-1} \mathbb{E}[\hat \varphi_{\ell-1}(\bm \omega) - \varphi(\bm \omega)]^{2p} .
\end{align*}   
We recall that $\hat \varphi_{\ell}(\bm \omega)$ is an unbiased Monte Carlo estimate of $\varphi(\bm \omega)$ , Hence, as long as $p \geq 2$, it follows from Lemma \ref{lemma:MC_p} that
\begin{align*}
     \mathbb{E}[(\hat \varphi_{\ell}(\bm \omega) - \varphi(\bm \omega))^{2p}|\bm \omega]
     &\leq  C\frac{\mathbb{E}[|X(\bm \omega) - \varphi(\bm \omega)|^{2p}|\bm \omega]}{m_{\ell}^p}\\
     &\leq 2^{2p}C \frac{\mathbb{E}[\varphi^{2p}(\bm \omega)|\bm \omega]}{m_{\ell}^p}.
\end{align*}
Take expectations on both sides of the inequality, we can get
\begin{equation*}
    \mathbb{E}[\hat \varphi_{\ell}(\bm \omega) - \varphi(\bm \omega)]^{2p} = O(m_{\ell}^{-p}).
\end{equation*}
Thus,
\begin{equation*}
    \mathbb{E}[e_{\ell}^{2p}(\bm \omega)]= O(m_{\ell}^{-p}).
\end{equation*}
Similarly, we can obtain that
\begin{equation*}
    \mathbb{E}[(\partial_{\omega_1}e_{\ell})^{2p}(\bm \omega)]= O(m_{\ell}^{-p}).
\end{equation*}
\end{proof}

We now present the theorem of this section, which establishes the optimal variance decay rate for the MLMC estimator with smoothing. To obtain the result of Theorem \ref{thm:var}, we follow the proof strategy of \cite{bayer2024multilevel}.
By incorporating preintegration, we reduce the strong convergence rate to $-1$, compared with $-1/2$ for standard MLMC \cite{giles_2019_mlmc}.

\begin{theorem} \label{thm:var} 	Let  the function $g, \varphi$ as in \eqref{eq:theta} and $\hat h_\ell$ as in \cref{eq:hat_h_m}. Then under  Assumptions \ref{assume:varphi}, \ref{assume:varphi_1} and \ref{assume:score}, we obtain 
	\begin{equation*}
	\hat V_{\ell}=O(m^{-1}_{\ell}) .
	\end{equation*}
\end{theorem}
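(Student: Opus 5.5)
Following the strategy of \cite{bayer2024multilevel}, I would bound $\hat V_\ell \le \mathbb{E}[\hat Y_\ell^2] = \mathbb{E}[(\hat h_\ell - \hat h_{\ell-1})^2]$. The difference of preintegrated indicators can be written through the discontinuity locations. Fix $\bm\omega_{-1}$ together with the inner samples. By Assumption \ref{assume:varphi_1}, $\hat\varphi_\ell(\cdot,\bm\omega_{-1})$ is strictly monotone in $\omega_1$, so it crosses $c$ at a unique point $\hat y_\ell$, which may be $\pm\infty$. Then
\begin{equation*}
\hat h_\ell(\bm\omega_{-1}) = \int_{\hat y_\ell}^{\infty}\rho_1(y)\,dy = 1-F_1(\hat y_\ell),
\end{equation*}
where $F_1$ is the c.d.f.\ of $\omega_1$. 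Hence $\hat Y_\ell = F_1(\hat y_{\ell-1})-F_1(\hat y_\ell)$.

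The key step is to differentiate with respect to the inner noise through an interpolation. Set $\hat\varphi^{(t)} = \hat\varphi_{\ell-1} + t\,e_\ell$ for $t\in[0,1]$, with $e_\ell$ as in Lemma \ref{lemma:e_l}. Let $y(t)$ be the root of $\hat\varphi^{(t)}(y,\bm\omega_{-1})=c$, assuming the derivative bound of Assumption \ref{assume:varphi_1} holds along the interpolation. The implicit function theorem gives $y'(t) = -e_\ell(y(t),\bm\omega_{-1})/\partial_{\omega_1}\hat\varphi^{(t)}(y(t),\bm\omega_{-1})$, and therefore
\begin{equation*}
\hat Y_\ell = -\int_0^1 \rho_1(y(t))\,y'(t)\,dt = \int_0^1 \frac{\rho_1(y(t))\,e_\ell(y(t),\bm\omega_{-1})}{\partial_{\omega_1}\hat\varphi^{(t)}(y(t),\bm\omega_{-1})}\,dt .
\end{equation*}
I would then apply Cauchy--Schwarz/H\"older in $t$ and in expectation. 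The factor $(\partial_{\omega_1}\hat\varphi^{(t)})^{-1}$ is $\mathcal{O}(1)$ by Assumption \ref{assume:varphi_1} and has moments of all orders. It remains to show $\mathbb{E}[\rho_1(y(t))^2 e_\ell(y(t),\bm\omega_{-1})^2]=O(m_\ell^{-1})$.

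The main obstacle is that $e_\ell$ is evaluated at the random, data-dependent point $y(t)$, so Lemma \ref{lemma:e_l} does not apply directly; it controls moments of $e_\ell(\bm\omega)$ only at an independent $\omega_1\sim\rho_1$. I would remove the point evaluation by a Sobolev-type bound in one dimension, integrating by parts against $\rho_1$. For any $y$,
\begin{equation*}
\rho_1(y)\,|e_\ell(y)| \le \int_{\mathbb{R}} \bigl|\partial_{\omega_1}(\rho_1 e_\ell)(\omega_1)\bigr|\,d\omega_1 = \int_{\mathbb{R}} \bigl|\partial_{\omega_1}e_\ell + s(\omega_1)e_\ell\bigr|\,\rho_1(\omega_1)\,d\omega_1 .
\end{equation*}
Here $s=\rho_1'/\rho_1$ is the score function. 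By Jensen and H\"older, the square of the right-hand side is bounded by
\begin{equation*}
\mathbb{E}_{\omega_1}\bigl[(\partial_{\omega_1}e_\ell)^2\bigr] + \bigl(\mathbb{E}_{\omega_1}[s^{2q}]\bigr)^{1/q}\bigl(\mathbb{E}_{\omega_1}[e_\ell^{2q'}]\bigr)^{1/q'} ,
\end{equation*}
up to a constant. Taking the outer expectation over $\bm\omega_{-1}$ and the inner samples, Lemma \ref{lemma:e_l} bounds both \eqref{eq:e_l} and \eqref{eq:partial_e_l} with the right exponents by $O(m_\ell^{-1})$. Assumption \ref{assume:score} controls the moments of $s$. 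Assumption \ref{assume:varphi} guarantees the moment bounds needed in Lemma \ref{lemma:e_l}.

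Combining these bounds gives $\mathbb{E}[\hat Y_\ell^2]=O(m_\ell^{-1})$ uniformly in $t$, hence $\hat V_\ell = O(m_\ell^{-1})$. One minor point needs checking: the cases where a root is infinite, i.e.\ $\hat\varphi$ never crosses $c$. There $\hat h$ equals $0$ or $1$, and the same formula holds with $\rho_1(\pm\infty)=0$, so the argument covers them.
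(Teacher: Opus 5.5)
Your argument is correct in substance, but it reaches the bound by a different route from the paper's. The paper never locates the crossing point. It mollifies $g$ and writes $\tilde g_\delta(\hat\varphi_\ell)-\tilde g_\delta(\hat\varphi_{\ell-1})=\int_0^1\tilde g_\delta'(z(\theta))\,d\theta\;e_\ell$, with $z(\theta)=\hat\varphi_{\ell-1}+\theta e_\ell$ (your $\hat\varphi^{(t)}$). It then trades $\tilde g_\delta'$ for $\partial_{\omega_1}\tilde g_\delta(z)\,(\partial_{\omega_1}z)^{-1}$, integrates by parts in $\omega_1$ (boundary terms via the appendix lemma), and lets $\delta\to0$. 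Finally it bounds the resulting terms (I) and (II) by H\"older, using $|g|\le 1$, Lemma~\ref{lemma:e_l}, the score $s$, and moment bounds on $(\partial_{\omega_1}z)^{-1}$ and $\partial^2_{\omega_1}z$.

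The two routes are closer than they look. $g(z(\theta;\cdot,\bm\omega_{-1}))$ is the indicator of a half-line ending at $y(\theta)$. Hence the paper's identity $\Delta h_\ell=-\int_0^1\int_{\mathbb{R}}g(z)\,\partial_{\omega_1}\bigl((\partial_{\omega_1}z)^{-1}e_\ell\rho_1\bigr)\,d\omega_1\,d\theta$ collapses to exactly your root formula. Using $|g|\le 1$ there amounts to your one-dimensional Sobolev bound applied to the whole product $\rho_1e_\ell(\partial_{\omega_1}\hat\varphi^{(t)})^{-1}$.

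What your version buys:
\begin{itemize}
\item a transparent derivation with no mollification and no $\delta\to0$ limit;
\item a pathwise treatment of the boundary. Since $\rho_1e_\ell$ and its $\omega_1$-derivative are a.s.\ integrable in $\omega_1$, $\rho_1e_\ell\to0$ at $\pm\infty$. This is exactly what your inequality $\rho_1(y)|e_\ell(y)|\le\int_{\mathbb{R}}|\partial_{\omega_1}(\rho_1e_\ell)|\,d\omega_1$ needs, and you should state it explicitly;
\item no second derivatives of $\hat\varphi$ at all, because you split off $(\partial_{\omega_1}\hat\varphi^{(t)})^{-1}$ before the Sobolev step.
\end{itemize}

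That split is the one step to tighten. You evaluate $(\partial_{\omega_1}\hat\varphi^{(t)})^{-1}$ at the data-dependent point $y(t)$, which is precisely the obstacle you correctly flagged for $e_\ell$. This is harmless only if Assumption~\ref{assume:varphi_1} is read as a bound uniform in $\omega_1$, i.e.\ by a random variable depending only on $\bm\omega_{-1}$ and the inner samples. Under Definition~\ref{def:O1} as written, the dominating variable may depend on $\omega_1$, and its value at $y(t)$ then has no controlled moments. The paper's estimates of (I) and (II) use only $\rho_1$-weighted norms in $\omega_1$, so they need only this weaker reading, at the cost of requiring $\partial^2_{\omega_1}\hat\varphi$. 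To work under the same hypotheses, apply your Sobolev bound to the full product; this reproduces (I) and (II).

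Also, because the inverse-derivative factor is random rather than deterministically bounded, H\"older requires $\bigl(\mathbb{E}[(\rho_1e_\ell)^{2r'}(y(t))]\bigr)^{1/r'}=O(m_\ell^{-1})$, not just the second moment you stated. Your Sobolev argument delivers this, using Jensen at the higher power and Lemma~\ref{lemma:e_l} with a correspondingly larger $p$.
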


\begin{proof}\label{proof: Lipschitz of the integrands_revised}
First we observe that  $\hat V_{\ell}  = \text{Var}\left[\hat{h}_{\ell}-\hat{h}_{\ell-1}\right] \le  \mathbb{E}\left[\left(\hat{h}_{\ell}-\hat{h}_{\ell-1}\right)^2\right] $. Therefore, we just need to prove that
$\mathbb{E}\left[\left(\hat{h}_{\ell}-\hat{h}_{\ell-1}\right)^2\right] =O(m^{-1}_{\ell})$.
For $\delta>0$, we define $\tilde{g}_{\delta}$ denotes a $C^\infty$ mollified version of g (i.e., obtained by
convoluting g with a mollifier), then we have

\begin{align}
	&\Delta h^{\delta}_{\ell}(\bm \omega_{-1}):=(\hat{h}^{\delta}_{\ell}-\hat{h}^{\delta}_{\ell-1})( \bm \omega_{-1})\nonumber\\
	&:= \int_{\mathbb{R}} \left(  \tilde{g}_{\delta} (\hat \varphi_{\ell}(\omega_1, \bm \omega_{-1})) - \tilde{g}_{\delta} (\hat \varphi_{\ell-1}(\omega_1, \bm \omega_{-1}))  \right) \rho_1(\omega_1) d\omega_1\label{eq:antith_decomp}\\
	&= \int_{\mathbb{R}} \left[ \int_{0}^{1} \tilde{g}_{\delta}'  \left(\underset{:=z(\theta; \omega_1,\bm \omega_{-1})}{\underbrace{\hat{\varphi}_{\ell-1}(\omega_1 , \bm \omega_{-1})+\theta   e_{\ell}(\omega_1}, \bm \omega_{-1}})\right) d \theta  \right]  e_{\ell}(\omega_1 , \bm \omega_{-1}) \; \rho_1(\omega_1) d\omega_1, \: \theta \in (0,1) \nonumber \\
    &(e_{\ell}(\omega_1 , \bm \omega_{-1}) = \hat{\varphi}_{\ell}(\omega_1 , \bm \omega_{-1})- \hat{\varphi}_{\ell-1}(\omega_1 , \bm \omega_{-1})) \nonumber\\
	&= \int_{\mathbb{R}} \left[ \int_{0}^{1}  \partial_{\omega_1} \tilde{g}_{\delta} (z(\theta;\omega_1 , \bm \omega_{-1}) )  \left(\partial_{\omega_1} z(\theta;\omega_1 , \bm \omega_{-1}) \right)^{-1} d \theta  \right]  e_{\ell}(\omega_1 , \bm \omega_{-1}) \; \rho_1(\omega_1) d\omega_1  \quad  \label{eq:chain rule}\\ 
		&=  \int_{0}^{1} \left[ \int_{\mathbb{R}}   \partial_{\omega_1} \tilde{g}_{\delta} (z(\theta;\omega_1 , \bm \omega_{-1})  )  \left(\partial_{\omega_1} z(\theta;\omega_1 , \bm \omega_{-1})  \right)^{-1}    e_{\ell}(\omega_1 , \bm \omega_{-1})  \; \rho_1(\omega_1) d\omega_1\right]  d\theta  \:(\text{Fubini's theorem})\nonumber\\
		&=-  \int_{0}^{1}  \left[  \int_{\mathbb{R}} \tilde{g}_{\delta} (z(\theta;\omega_1 , \bm \omega_{-1})  )\;  \partial_{\omega_1} \left(\left(\partial_{\omega_1} z(\theta;\omega_1 , \bm \omega_{-1})  \right)^{-1}  e_{\ell}(\omega_1 , \bm \omega_{-1})  \; \rho_1(\omega_1)  \right)  d\omega_1 \right] d \theta \label{eq:boundary}\\ 
		&=  -\int_{0}^{1}  \left[  \int_{\mathbb{R}} \tilde{g}_{\delta}(z(\theta;\omega_1 , \bm \omega_{-1}) )  \left(   e_{\ell}(\omega_1 , \bm \omega_{-1})  \; \partial_{\omega_1} \left(\left(\partial_{\omega_1} z(\theta;\omega_1 , \bm \omega_{-1})  \right)^{-1}  \rho_1(\omega_1) \right) \right.\right.\nonumber\\
        &+ \left.\left. \left(\partial_{\omega_1} z(\theta;\omega_1 , \bm \omega_{-1})  \right)^{-1}    \rho_1(\omega_1)\; \partial_{\omega_1} e_{\ell}(\omega_1 , \bm \omega_{-1})   \right)  d\omega_1 \right] d \theta\nonumber\\
			&=  -\int_{0}^{1}  \left[  \int_{\mathbb{R}} e_{\ell}(\omega_1 , \bm \omega_{-1}) \tilde{g}_{\delta}(z(\theta;\omega_1 , \bm \omega_{-1}) )   \left(   \partial_{\omega_1} \left(\left(\partial_{\omega_1} z(\theta;\omega_1 , \bm \omega_{-1})  \right)^{-1}     \right) + s(\omega_1)  \left(\partial_{\omega_1} z(\theta;\omega_1 , \bm \omega_{-1})  \right)^{-1}\right)   \rho_1(\omega_1)d\omega_1 \right] d \theta\nonumber\\ 
				&- \int_{0}^{1}  \left[  \int_{\mathbb{R}}  \partial_{\omega_1} e_{\ell}(\omega_1 , \bm \omega_{-1})  \tilde{g}_{\delta}(z(\theta;\omega_1 , \bm \omega_{-1}) )  \left(\partial_{\omega_1} z(\theta;\omega_1 , \bm \omega_{-1})  \right)^{-1}     \rho_1(\omega_1) d\omega_1 \right] d \theta,\label{eq:proof_result1}
\end{align}
where $s$ in \cref{eq:proof_result1} is $s(x) = \rho'_1(x)/\rho_1(x)$. The equation \cref{eq:chain rule} uses 
$\partial_{\omega_1} \tilde{g}_{\delta} = \tilde{g}_{\delta}'   \partial_{\omega_1} z$, and the boundary terms of \cref{eq:boundary} vanish due to Lemma \ref{lemma: boundary_condition_error growth}. 

Taking $\delta\rightarrow 0$ and applying  the dominated convergence theorem to   \cref{eq:proof_result1}, we  obtain
\begin{small}
	\begin{align}
		\Delta h_{\ell}(\bm \omega_{-1})&:=(h_{\ell}-h_{\ell-1})(\bm \omega_{-1})\nonumber\\
		&=  \underset{(I)}{\underbrace{{ -\int_{0}^{1}  \left[  \int_{\mathbb{R}} e_{\ell}(\omega_1 , \bm \omega_{-1})  g(z(\theta;\omega_1 , \bm \omega_{-1}) )   \left(   \partial_{\omega_1} \left(\left(\partial_{\omega_1} z(\theta;\omega_1 , \bm \omega_{-1})  \right)^{-1}     \right) + s(\omega_1) \left(\partial_{\omega_1} z(\theta;\omega_1 , \bm \omega_{-1})  \right)^{-1}\right)  \rho_1(\omega_1) d\omega_1 \right] d \theta}}}\nonumber\\
		& \underset{(II)}{\underbrace{{- \int_{0}^{1}  \left[  \int_{\mathbb{R}}  \partial_{\omega_1} e_{\ell}(\omega_1 , \bm \omega_{-1})  g(z(\theta;\omega_1 , \bm \omega_{-1}) )  \left(\partial_{\omega_1} z(\theta;\omega_1 , \bm \omega_{-1})  \right)^{-1}       \rho_1(\omega_1) d\omega_1 \right] d \theta}}}. \label{eq:delta_hl}
	\end{align}
\end{small}

For the   term (I) in   \eqref{eq:delta_hl},  taking expectation  and using H\"older's inequality twice ($p,q, p_1,q_1 \in (1,+\infty)$,  $\frac{1}{p}+\frac{1}{q}=1$ and $\frac{1}{p_1}+\frac{1}{q_1}=1$), result in

	\begin{align}
		\mathbb{E}\left[\left( I\right)^2\right]	
		&\le  \mathbb{E}\left[\left | \left|   g(z(\cdot; \cdot, \bm \omega_{-1})) \left(   \partial_{\omega_1} \left(\left(\partial_{\omega_1} z(\cdot;\cdot,\bm \omega_{-1}) \right)^{-1}     \right) + s(\cdot) \left(\partial_{\omega_1} z(\cdot;\cdot,\bm \omega_{-1}) \right)^{-1}\right)\right|\right|^2_{L_{\rho_1}^q ([0,1] \times \mathbb{R} )}  \right.\nonumber \\
        &\times  \left.\left|\left|   e_{\ell}(\cdot, \bm \omega_{-1})\right|\right|^2_{L_{\rho_1}^p (\mathbb{R})} \right]	\nonumber\\
		&\le  \left(\mathbb{E}\left[\left | \left|   g(z(\cdot; \cdot,\bm \omega_{-1}))  \left(   \partial_{\omega_1} \left(\left(\partial_{\omega_1} z(\cdot;\cdot,\bm \omega_{-1}) \right)^{-1}     \right) + s(\cdot)  \left(\partial_{\omega_1} z(\cdot;\cdot,\bm \omega_{-1}) \right)^{-1}\right)\right|\right|^{2q_1}_{L_{\rho_1}^q ([0,1]  \times \mathbb{R} )}\right] \right)^{1/q_1}\nonumber\\
		&  \times  \left(\mathbb{E}\left[\left|\left|    e_{\ell}(\cdot, \bm \omega_{-1})\right|\right|^{2p_1}_{L_{\rho_1}^p (\mathbb{R})} \right]\right)^{1/p_1}.\label{eq:E_I}
	\end{align}
The first term in  the right-hand side  of \eqref{eq:E_I} is bounded, because we observe that  

\begin{align}
(\partial_{\omega_1} z(\theta;\omega_1 , \bm \omega_{-1}))^{-1}&=   \left(\partial_{\omega_1} \hat \varphi_{\ell-1}(\omega_1, \bm \omega_{-1})\right)^{-1}  \left((1- \theta) + \theta \frac{\partial_{\omega_1} \hat \varphi_{\ell}(\omega_1, \bm \omega_{-1})}{\partial_{\omega_1} \hat \varphi_{\ell-1}(\omega_1, \bm \omega_{-1})}\right)^{-1},\label{eq:partial_z_1}\\
\partial_{\omega_1} \left((\partial_{\omega_1} z(\theta;\omega_1 , \bm \omega_{-1}))^{-1}\right)&=   -\partial^2_y z(\theta;y,B_{\ell})(\partial_y z(\theta;y,B_{\ell}))^{-2},\nonumber\\
&= -\left((1- \theta) \partial^2_{\omega_1} \hat \varphi_{\ell-1}(\omega_1, \bm \omega_{-1})+ \theta \partial^2_{\omega_1} \hat \varphi_{\ell-1}(\omega_1, \bm \omega_{-1})\right) \left(\partial_{\omega_1} \hat \varphi_{\ell-1}(\omega_1, \bm \omega_{-1})\right)^{-2} \nonumber\\
&\times\left((1- \theta) + \theta \frac{\partial_{\omega_1} \hat \varphi_{\ell}(\omega_1, \bm \omega_{-1})}{\partial_{\omega_1} \hat \varphi_{\ell-1}(\omega_1, \bm \omega_{-1})}\right)^{-2}.\nonumber
\end{align}

 Using Assumption \ref{assume:varphi_1}, we obtain that $\left(\partial_{\omega_1} \hat \varphi_{\ell-1}(\omega_1,\bm \omega_{-1})\right)^{-1} $ and $\left(\partial_{\omega_1} \hat \varphi_{\ell-1}(\omega_1,\bm \omega_{-1})\right)^{-2}$ are bounded in moments, i.e.,  $\mathcal{O}(1)$ in the sense of Definition \ref{def:O1}. Moreover, using Assumption \ref{assume:varphi}, we obtain that  $ \partial^2_{\omega_1}  \hat \varphi_{\ell-1}(\omega_1,\bm \omega_{-1})$ and  $ \partial^2_{\omega_1}  \hat \varphi_{\ell}(\omega_1,\bm \omega_{-1})$ are bounded in moments.  These results with Assumption \ref{assume:score} imply  
 \begin{equation}\label{eq:E_I_one}
     \left(\mathbb{E}\left[\left | \left|   g(z(\cdot; \cdot, \bm \omega_{-1})) \left(   \partial_{\omega_1} \left(\left(\partial_{\omega_1} z(\cdot; \cdot, \bm \omega_{-1}) \right)^{-1}     \right) +s(\cdot)  \left(\partial_{\omega_1} z(\cdot; \cdot, \bm \omega_{-1}) \right)^{-1}\right) \right|\right|^{2q_1}_{L_{\rho_1}^q (  [0,1]  \times \mathbb{R})}\right] \right)^{1/q_1}  <\infty.
 \end{equation}

Choosing $p$ and $p_1$ such that $\frac{2p_1}{p}\le1$, and applying Jensen's inequality for  the second term in  the right-hand side  of \eqref{eq:E_I}, then using \cref{eq:e_l} in Lemma \ref{lemma:e_l}, we obtain

\begin{align}
	\left(\mathbb{E}\left[\left|\left|    e_{\ell}(\cdot, \bm \omega_{-1})\right|\right|^{2p_1}_{L_{\rho_1}^p (\mathbb{R})} \right]\right)^{1/p_1} &= 	\left(\mathbb{E}\left[\left(\int_{\mathbb{R}}  \left| e^p_{\ell}(\omega_1 , \bm \omega_{-1}) \right| \rho_1 d\omega_1\right)^{\frac{2p_1}{p}} \right]\right)^{1/p_1} 	\nonumber\\
	&\le	\left(\mathbb{E}\left[\int_{\mathbb{R}}   \left|e^p_{\ell}(\omega_1 , \bm \omega_{-1}) \right| \rho_1 d\omega_1\right]\right)^{\frac{2}{p}} 	\nonumber\\
	&= O( m^{-1}_{\ell}). \label{eq:E_I_two}
\end{align}

Consequently, combined with \cref{eq:E_I_one} and \cref{eq:E_I_two} we conclude that  $\mathbb{E}\left[\left( I\right)^2\right]=O( m^{-1}_{\ell})$.

For the   term (II) in   \eqref{eq:delta_hl},  taking expectation  and using H\"older's  inequality twice ($p,q, p_1,q_1 \in (1,+\infty)$,  $\frac{1}{p}+\frac{1}{q}=1$ and $\frac{1}{p_1}+\frac{1}{q_1}=1$), result in

	\begin{align}
		\mathbb{E}\left[\left( II\right)^2\right]	
		&\le  \mathbb{E}\left[\left | \left|   g(z(\cdot; \cdot, \bm \omega_{-1})) \left(\partial_{\omega_1} z(\cdot; \cdot, \bm \omega_{-1}) \right)^{-1}      \right|\right|^2_{L_{\rho_1}^q ( [0,1] \times  \mathbb{R})}   \times \left|\left|   \partial_{\omega_1} e_{\ell}( \cdot, \bm \omega_{-1})\right|\right|^2_{L_{\rho_1}^p (\mathbb{R})} \right]	\nonumber\\
		&\le  \left(\mathbb{E}\left[\left | \left|   g(z(\cdot; \cdot, \bm \omega_{-1})) \left(\partial_{\omega_1} z(\cdot; \cdot, \bm \omega_{-1}) \right)^{-1}\right|\right|^{2q_1}_{L_{\rho_1}^q ([0,1] \times \mathbb{R})}\right] \right)^{1/q_1} \times  \left(\mathbb{E}\left[\left|\left|    \partial_{\omega_1} e_{\ell}(\cdot, \bm \omega_{-1})\right|\right|^{2p_1}_{L_{\rho_1}^p (\mathbb{R})} \right]\right)^{1/p_1}.\label{eq:E_II}
	\end{align}

Similarly to \eqref{eq:E_I_two} and using \cref{eq:partial_e_l} in Lemma \ref{lemma:e_l}, we obtain that  $ \left(\mathbb{E}\left[\left|\left|  \partial_y   e_{\ell}(\cdot,\bm \omega_{-1})\right|\right|^{2p_1}_{L_{\rho_1}^p (\mathbb{R})} \right]\right)^{1/p_1}= O( m^{-1}_{\ell})$.  Moreover, similar to the prove of \cref{eq:E_I_two}, we get the first term in  the right-hand side  of \eqref{eq:E_II} to be bounded. This concludes that $\mathbb{E}\left[\left( II\right)^2\right]=O( m^{-1}_{\ell})$, 
and consequently finishes  the  proof. 
 \end{proof}
\begin{remark}
    We can reduce the variance by using antithetic sampling \cite{bujok2015multilevel, goda_nested_max, giles_2019_mlmc}, a common technique in the MLMC method. However, when the outer function is discontinuous, such as the indicator function in \cref{eq:theta}, antithetic sampling usually yields little improvement. Antithetic sampling generally achieves satisfactory variance reduction only for continuous functions, which aligns well with our proposed smoothed MLMC method. We construct the following antithetic form by replacing $\hat Y_\ell = \hat h_\ell - \hat h_{\ell-1}$ for coupling the consecutive levels
\begin{equation}\label{eq:antith_estimator}
    \hat Y_\ell = \hat h_\ell - \frac{1}{2}(\hat h^{(1)}_{\ell-1}+\hat h^{(2)}_{\ell-1})    
\end{equation}
where
\begin{equation*}
     \hat h^{(i)}_{\ell-1}(\bm \omega_{-1})=\int_{\mathbb{R}} \hat g^{(i)}_{\ell-1}(\omega_1, \bm \omega_{-1})\rho_1(\omega_1) d\omega_1,\hat g^{(i)}_{\ell-1}(\y) = \mathbb{I} \{\hat \varphi^{(i)}_{\ell-1}(\y) > c\}
\end{equation*}
and
\begin{equation*}
    \hat{\varphi}_{{\ell-1}}^{(i)}(\y)=\frac 1 {m_{\ell-1}}\sum_{j=1+(i-1)m_{\ell-1}}^{im_{\ell-1}} X_j(\bm \y),\ i=1,2.
\end{equation*}
We can prove that the strong convergence rate of antithetic sampling remains $O(m_\ell^{-1})$
, which can be shown by decomposing \cref{eq:antith_decomp} into two parts. Although antithetic sampling does not alter our theoretical convergence order, it exhibits favorable performance in the numerical experiments presented in Section \ref{sec:Numerical experiments}.
\end{remark}
\subsection{Complexity analysis}

To achieve a root mean square error (RMSE) for certain tolerance $\text{TOL}$ with minimal computational cost, we formulate the following constrained optimization problem for the MLMC parameters, one needs to solve  \eqref{eq:opt_MLMC_work} using  \eqref{eq:total_error_estimate MLMC}
	\begin{align}\label{eq:opt_MLMC_work}
		\begin{cases} 
			\underset{ \left( L,\{N_{\ell}\}_{\ell=0}^L,\{M_{\text{Lag},\ell}\}_{\ell=0}^L , \{\text{TOL}_{\text{Newton},\ell}\}_{\ell=0}^L \right)}{\operatorname{min}} \: \mathcal{C} \left( L,\{N_{\ell}\}_{\ell=0}^L,\{M_{\text{Lag},\ell}\}_{\ell=0}^L , \{\text{TOL}_{\text{Newton},\ell}\}_{\ell=0}^L \right)  \\
			s.t. \:   \mathcal{\mathbb{E}}[(\bar Q-\theta)^2]=\text{TOL}^2
		\end{cases}
	\end{align}
where $ \mathcal{C} \left( L,\{N_{\ell}\}_{\ell=0}^L,\{M_{\text{Lag},\ell}\}_{\ell=0}^L , \{\text{TOL}_{\text{Newton},\ell}\}_{\ell=0}^L \right)$ is the total computational complexity of our method.
From  \eqref{eq:C_l}, we obtain the total computational complexity of our approach as follows
\begin{align}\label{eq:MLMC_work}
\mathcal{C} &\left( L,\{N_{\ell}\}_{\ell=0}^L,\{M_{\text{Lag},\ell}\}_{\ell=0}^L , \{\text{TOL}_{\text{Newton},\ell}\}_{\ell=0}^L \right)  \propto   \sum_{\ell=0}^L N_{\ell} C_{\ell} \nonumber\\
& \propto   \sum_{\ell=0}^L N_\ell m_\ell  \left(M_{\text{Lag},\ell}+ \log\left(\text{TOL}_{\text{Newton},\ell}^{-1}\right)  \right) .
\end{align}
In this work, we do not solve \eqref{eq:opt_MLMC_work}; however, we select the different parameters   heuristically. A further investigation of optimizing  \eqref{eq:opt_MLMC_work} is left for a future study.

The following corollary summarizes the computational complexity of the MLMC with preintegration and MLMC with numerical preintegration.
\begin{remark}
Since we have improved the strong convergence rate from $-1/2$ to $-1$ by incorporating the preintegration, our MLMC with preintegration also achieves a sub-optimal computational complexity. Using numerical preintegration introduces additional computational cost, but the constant $s\gg1$ in the $O(\text{TOL}^{-2-2/s} \left(\log(\text{TOL})\right)^2)$  usually makes this extra cost negligible \cite{bayer2024multilevel}.    
\end{remark}
\begin{corollary}\label{corrol: Complexity of MLMC_smoothing}
 Under  the Assumptions of  Theorem \ref{thm:bias} and Theorem \ref{thm:var} ,	for any $0<\text{TOL}<1$, the complexity of MLMC  with numerical preintegration is $O(\text{TOL}^{-2-2/s} \left(\log(\text{TOL})\right)^2)$ . Moreover, the complexity of MLMC  with preintegration is $O(\text{TOL}^{-2} \left(\log(\text{TOL})\right)^2)$.
\end{corollary}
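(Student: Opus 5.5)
The plan is to apply the standard MLMC complexity theorem (Giles' framework) with the rates established earlier. The inputs are:
\begin{itemize}
\item weak error $|\mathbb{E}[\hat h_L]-\theta| = O(m_L^{-1})$ from Theorem \ref{thm:bias}, so $\alpha=1$ in terms of $m_\ell=2^{\ell+\ell_0}$;
\item variance $\hat V_\ell=O(m_\ell^{-1})$ from Theorem \ref{thm:var}, so $\beta=1$;
\item cost per sample $C_\ell\propto m_\ell(M_{\text{Lag},L}+\log \text{TOL}_{\text{Newton},L}^{-1})$ from \eqref{eq:C_l}, so $\gamma=1$ up to the level-independent factor $K:=M_{\text{Lag},L}+\log(\text{TOL}_{\text{Newton},L}^{-1})$.
\end{itemize}
We are therefore in the borderline case $\beta=\gamma$, where the MLMC theorem predicts cost $O(\text{TOL}^{-2}(\log \text{TOL})^2)$, multiplied here by $K$.

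First, split the tolerance according to \eqref{eq:total_error_estimate MLMC}: require each of the three error terms to be at most $\text{TOL}^2/3$ (up to constants).
\begin{itemize}
\item For Error I, choose $L=\lceil \log_2(\text{TOL}^{-1})\rceil-\ell_0+O(1)$, so that $m_L^{-2}\lesssim \text{TOL}^2$. This gives $L=O(|\log \text{TOL}|)$.
\item For Error II, use the Lagrange-optimal allocation
\[
N_\ell=\left\lceil \text{TOL}^{-2}\sqrt{\hat V_\ell/C_\ell}\,\sum_{k=0}^L\sqrt{\hat V_kC_k}\right\rceil .
\]
Then $\sum_\ell N_\ell^{-1}\hat V_\ell\le \text{TOL}^2$.
\item For Error III, pick $M_{\text{Lag},L}\asymp \text{TOL}^{-2/s}$ and $\text{TOL}_{\text{Newton},L}\asymp \text{TOL}$.
\end{itemize}
There is one subtlety. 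The bound \eqref{eq:ErrorIII} carries the factor $(L+1)\sum_\ell$, i.e.\ roughly $L^2$ times the per-sample quadrature error. To compensate, I would tighten these choices by logarithmic factors, e.g.\ $M_{\text{Lag},L}\asymp (\text{TOL}/L)^{-2/s}$ and $\text{TOL}_{\text{Newton},L}\asymp \text{TOL}/L$. This only changes lower-order log factors inside $K$.

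Next, bound the total cost. The key identity is
\[
\mathcal C\lesssim \sum_\ell N_\ell C_\ell\lesssim \text{TOL}^{-2}\Big(\sum_{\ell=0}^L\sqrt{\hat V_\ell C_\ell}\Big)^2+\sum_{\ell=0}^L C_\ell .
\]
Since $\hat V_\ell C_\ell\lesssim m_\ell^{-1}m_\ell K=K$, the first term is $O(\text{TOL}^{-2}(L+1)^2K)=O(\text{TOL}^{-2}(\log\text{TOL})^2K)$. The ceiling contribution is $\sum_\ell C_\ell\lesssim m_LK\lesssim \text{TOL}^{-1}K$, which is of lower order.

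Finally, substitute the value of $K$ in each setting.
\begin{itemize}
\item \textbf{Exact preintegration.} There is no quadrature or Newton step, so $K=O(1)$ and Error III vanishes. This gives $O(\text{TOL}^{-2}(\log\text{TOL})^2)$.
\item \textbf{Numerical preintegration.} Here $K\asymp \text{TOL}^{-2/s}+\log\text{TOL}^{-1}$, up to the log adjustments above. The Newton part is only logarithmic, and I would absorb it together with the log-adjusted Laguerre count. This gives $O(\text{TOL}^{-2-2/s}(\log\text{TOL})^2)$, as stated, allowing harmless log factors to be absorbed as the paper's remark suggests.
\end{itemize}

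The main obstacle is this bookkeeping in the numerical case. The $(L+1)$ factor in \eqref{eq:ErrorIII} and the Newton cost both threaten extra logarithms. The cleanest fix is to note that $\mathbb{E}(\hat Q-\bar Q)^2$ can be bounded without the crude $(L+1)$ factor by using independence across levels plus the bias term. If that does not work, I would accept that the extra logs are dominated by $\text{TOL}^{-2/s}$ after slightly adjusting $s$.
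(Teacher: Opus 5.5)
Your proposal is correct and is essentially the paper's proof. It makes the same choices $L+1=O(\log\text{TOL}^{-1})$, $N_\ell\asymp\text{TOL}^{-2}(L+1)m_\ell^{-1}$ (which is what your Lagrange allocation reduces to), $M_{\text{Lag},L}\asymp\text{TOL}^{-2/s}$ and $\text{TOL}_{\text{Newton},L}\asymp\text{TOL}$, and then uses the same cost bound $\text{TOL}^{-2}(L+1)^2K+K\sum_\ell m_\ell$.

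The paper silently drops the $(L+1)$ factor in \eqref{eq:ErrorIII} that worries you. Your independence argument is the right way to remove it; the fallback $M_{\text{Lag},L}\asymp(\text{TOL}/L)^{-2/s}$ would leave an extra factor $|\log\text{TOL}|^{2/s}$. By independence across levels and telescoping, $\mathbb{E}(\hat Q-\bar Q)^2=(\mathbb{E}[\hat h_L-\bar h_L])^2+\sum_\ell N_\ell^{-1}\text{Var}[\hat Y_\ell-\bar Y_\ell]$. Once the $N_\ell$ are allocated using the bound $\hat V_\ell\lesssim m_\ell^{-1}$, one has $\sum_\ell N_\ell^{-1}=O(\text{TOL})$, so Error III is $O(M_{\text{Lag},L}^{-s}+\text{TOL}_{\text{Newton},L}^2)$ with no $L$-dependence.
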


\begin{proof}\label{proof: complexity}
We recall the error decomposition in \cref{eq: error decomposition MLMC}.
			   First, to have Error II to be  $\text{TOL}^2/3$, we  choose 
			 	\begin{equation}\label{eq:N_l}
			 N_{\ell}  \le C\; \text{TOL}^{-2} (L+1) m_{\ell}^{-1}+1,  
			 \end{equation}
			where $C$ is a constant, 		and  for Error I to be  $\text{TOL}^2/3$,   we obtain  
			 \begin{equation*}
			     L+1= O( \log(\text{TOL}^{-1})),
			 \end{equation*}
             and due to $0<\text{TOL}<1$, we have
             \begin{equation*}
                 \sum_{\ell=0}^L m_{\ell} = O(\text{TOL}^{-2}).
             \end{equation*}
		
			 Moreover, to bound Error III by  $\text{TOL}^2/3$, and using \cref{eq:ErrorIII}, we obtain 
			 \begin{align*}
			 M_{\text{Lag},L}&= O(\text{TOL}^{-2/s}),\\
			 N_{\text{iter},L}&= \log\left(\text{TOL}_{\text{Newton},L}^{-1}\right) =O(\log\left(\text{TOL}^{-1}\right)).
			 	\end{align*}
		For simplification, we assume	that  on all levels ($0 \le \ell \le L$)   $M_{Lag,\ell}=M_{Lag,L}$ and $\text{TOL}_{\text{Newton},\ell}=\text{TOL}_{\text{Newton},L}$. Then, Using \cref{eq:MLMC_work} and \cref{eq:N_l}, we have the computational complexity of our MLMC estimator with numerical smoothing is  
		
				\begin{align}
				\sum_{\ell=0}^L N_{\ell} C_{\ell}  &\propto  	\sum_{\ell=0}^L N_{\ell} m_{\ell}\left(M_{\text{Lag},\ell}+\log \left(\text{TOL}_{\text{Newton},\ell}^{-1}\right)\right) \nonumber\\
				& \le  C \: \text{TOL}^{-2} (L+1)^2  \left(M_{\text{Lag},L}+\log \left(\text{TOL}_{\text{Newton,L}}^{-1}\right)\right) \nonumber\\
                &+   \left(M_{\text{Lag},L}+\log \left(\text{TOL}_{\text{Newton,L}}^{-1}\right)\right)  \sum_{\ell=0}^L  m_{\ell}\nonumber\\
				& =    O(\text{TOL}^{-2-(2/s)} \left(\log(\text{TOL})\right)^2).\nonumber
			\end{align}

	\end{proof}

\subsection{Robustness analysis}
Due to the discontinuity of the indicator function \(g\) in \cref{eq:theta}, the standard MLMC method exhibits a high kurtosis phenomenon, which adversely affects its numerical stability and robustness.
 We define $Y_{\ell}:= \hat g_{m_\ell}-\hat g_{m_{\ell-1}}$. The standard deviation of the sample variance for  $Y_{\ell}$ is given by 
	\begin{equation*}
		\sigma_\ell :=\frac{\text{Var}[Y_{\ell}]}{\sqrt{N_{\ell}}}  \sqrt{(\kappa_{\ell}-1)+\frac{2}{N_{\ell}-1}},
	\end{equation*}
	where $\kappa_{\ell}$ is the kurtosis  at level $\ell$, given by
	\begin{equation*}
	\kappa_{\ell} :=	\frac{\mathbb{E}{\left( Y_{\ell}-\mathbb{E}{Y_{\ell}}\right)^4}}{\left(\text{Var}\left[ Y_{\ell}\right]\right)^2}.
	\end{equation*}
 High kurtosis makes it challenging to estimate $V_\ell$ accurately in deeper levels, since there are less samples in deeper levels under the MLMC scheme \cite{ben2020importance}. It should be noted that the high-kurtosis phenomenon is due to the structure of the indicator function. Therefore, the smoothed MLMC method we proposed earlier can precisely solve this problem.
 We define the kurtosis of smoothed MLMC method
    \begin{equation}\label{eq:kurt}
	\hat\kappa_{\ell}:=	\frac{\mathbb{E}{\left(\hat Y_{\ell}-\mathbb{E}{\hat Y_{\ell}}\right)^4}}{\left(\text{Var}\left[\hat Y_{\ell}\right]\right)^2}.
	\end{equation}

\begin{corollary}\label{corollary:kurt}
Let $\hat\kappa_\ell$ in \cref{eq:kurt} be the kurtosis of $\hat Y_\ell$, assume that $\hat V_\ell^{-1} = O(m_\ell)$.
Then under the same assumption of Theorem \ref{thm:var}, we obtain
\begin{equation*}
    \hat \kappa_\ell = O(1).
\end{equation*}
\end{corollary}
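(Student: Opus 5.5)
The denominator of $\hat\kappa_\ell$ is handled by the hypothesis $\hat V_\ell^{-1}=O(m_\ell)$, which gives $(\text{Var}[\hat Y_\ell])^{-2}=O(m_\ell^2)$. It therefore suffices to show that the centered fourth moment of $\hat Y_\ell$ is $O(m_\ell^{-2})$. Expanding $(a-b)^4\le 8(a^4+b^4)$ and applying Jensen to $(\mathbb{E}\hat Y_\ell)^4\le \mathbb{E}[\hat Y_\ell^4]$, we get $\mathbb{E}(\hat Y_\ell-\mathbb{E}\hat Y_\ell)^4\le 16\,\mathbb{E}[\hat Y_\ell^4]$. The task reduces to proving
\begin{equation*}
\mathbb{E}\left[(\hat h_\ell-\hat h_{\ell-1})^4\right]=O(m_\ell^{-2}),
\end{equation*}
which is the fourth-moment analogue of Theorem \ref{thm:var}.

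To prove this, I would reuse the representation \cref{eq:delta_hl} from the proof of Theorem \ref{thm:var}, namely $\Delta h_\ell=(I)+(II)$, obtained by mollification, the mean value form in $\theta$, integration by parts in $\omega_1$ (boundary terms vanishing by Lemma \ref{lemma: boundary_condition_error growth}), and $\delta\to0$. Since $(a+b)^4\le 8(a^4+b^4)$, it is enough to bound $\mathbb{E}[(I)^4]$ and $\mathbb{E}[(II)^4]$ separately.

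For (I), I would repeat the two-step H\"older argument of \cref{eq:E_I} with the power $2$ replaced by $4$:
\begin{equation*}
\mathbb{E}[(I)^4]\le \left(\mathbb{E}\|G_\ell\|^{4q_1}_{L^q_{\rho_1}([0,1]\times\mathbb{R})}\right)^{1/q_1}\left(\mathbb{E}\|e_\ell(\cdot,\bm\omega_{-1})\|^{4p_1}_{L^p_{\rho_1}(\mathbb{R})}\right)^{1/p_1}.
\end{equation*}
Here $G_\ell$ is the bracketed factor containing $g(z)$, $\partial_{\omega_1}((\partial_{\omega_1}z)^{-1})$ and $s(\omega_1)(\partial_{\omega_1}z)^{-1}$. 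The first factor is finite by the same reasoning that gave \cref{eq:E_I_one}. Indeed, $|g|\le1$; Assumption \ref{assume:varphi_1} makes $(\partial_{\omega_1}\hat\varphi_{\ell-1})^{-1}$ and its powers $\mathcal{O}(1)$; Assumption \ref{assume:varphi} controls the second derivatives; and Assumption \ref{assume:score} controls the score. Since $\mathcal{O}(1)$ means domination by a variable with finite moments of all orders, raising to the power $4q_1$ instead of $2q_1$ costs nothing. For the second factor, I would choose $p,p_1$ with $4p_1/p\le1$, apply Jensen as in \cref{eq:E_I_two}, and then use Lemma \ref{lemma:e_l} with exponent $p/2$ in place of $p$, i.e. $\mathbb{E}[e_\ell^{p}]=O(m_\ell^{-p/2})$. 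This gives $\left(\mathbb{E}\int|e_\ell|^p\rho_1\right)^{4/p}=O(m_\ell^{-2})$. Term (II) is treated identically, using \cref{eq:partial_e_l} in place of \cref{eq:e_l}.

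I expect the main obstacle to be the integrability bookkeeping rather than any new idea. The Jensen step requires $4p_1/p\le 1$, so $p\ge 4p_1>4$, and Lemma \ref{lemma:e_l} must be invoked at this larger exponent (it holds for every $p\ge2$). Correspondingly, $q$ approaches $1$ and $q_1$ becomes large, so I must check that the $\mathcal{O}(1)$ bounds and Assumption \ref{assume:score} (which has to hold for the relevant exponent) still make the first factor finite. Once both fourth moments are $O(m_\ell^{-2})$, combining with $\hat V_\ell^{-2}=O(m_\ell^2)$ yields $\hat\kappa_\ell=O(1)$.
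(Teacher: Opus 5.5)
Your proposal is correct and takes the same route as the paper. The paper's proof simply asserts that the argument of Theorem~\ref{thm:var} gives $\mathbb{E}(\hat Y_\ell-\mathbb{E}\hat Y_\ell)^4=O(m_\ell^{-2})$, then divides by $\hat V_\ell^2$ using the assumed lower bound $\hat V_\ell^{-1}=O(m_\ell)$. You fill in details the paper leaves implicit: reducing the centered fourth moment to the raw one, redoing the two H\"older steps with power $4$, and choosing $4p_1/p\le 1$ so that Lemma~\ref{lemma:e_l} is applied at exponent $p/2\ge 2$.
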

\begin{proof}
    Similar to the proof of Theorem \ref{thm:var}, we can obtain $\mathbb{E}{\left(\hat Y_{\ell}-\mathbb{E}{\hat Y_{\ell}}\right)^4} = O(m_\ell^{-2})$. Then, combined with $\hat V_\ell^{-1} = O(m_\ell)$, we get $\hat \kappa_\ell = O(1)$.
\end{proof}
\begin{remark}
In our previous analysis, we only considered the upper bound of \(\hat V_\ell\), for which we conducted theoretical analysis and derived the upper bound \(O(m_\ell^{-1})\).
However, to obtain the kurtosis of \(\hat Y_\ell\), we need a lower bound of \(\hat V_\ell\), which is rather difficult in theoretical analysis.
In numerical experiments for practical problems, we observed that the convergence rate of \(\hat V_\ell\) is roughly on par with \(-1\) and does not converge faster.
Therefore, in Corollary \ref{corollary:kurt}, we assume that \(\hat V_\ell\) has an exact strong convergence rate of \(-1\), i.e., $\hat V_\ell^{-1} = O(m_\ell)$.
\end{remark}

\section{Numerical experiments}\label{sec:Numerical experiments}
In this section, a series of numerical experiments are carried out to investigate
the performance of the proposed method. In the numerical analysis, we focus on a portfolio composed of European options that are written on $d$ underlying stocks, with the price dynamics of these stocks governed by the Black-Scholes framework. For simplicity, we make the assumption that all stock returns are identical and denoted by $\mu$, whereas the risk-free interest rate is represented by $\mu_0$. The price dynamics of the stock $\bm S_t=(S^1_t,\dots,S^d_t)$ is described by the following stochastic differential equation:
\begin{equation}\label{eq:sde_stock}
\frac{d S^i_t}{S_t^i}=\mu' d t+\sum_{j=1}^d\sigma_{ij}d B^i_t,\ i=1,\dots,d,
\end{equation}
where $\mu'=\mu$ under the real-world probability measure, and $\mu'=\mu_0$ under the risk-neutral probability measure. Here, $\bm B_t=(B^1_t,\dots,B^d_t)$ is a $d$-dimensional standard Brownian motion which represents $d$ risk factors in the model and $\sigma_{ij}$ are the corresponding
volatility effects. The solution of \cref{eq:sde_stock} is
analytically available
\begin{equation*}
S^i_t=S^i_0\exp\{(\mu'-\frac{1}{2}\sum_{j=1}^d\sigma^2_{ij})t+\sum_{j=1}^d\sigma_{ij}B_t^j\},\ i=1,\dots,d,
\end{equation*}
where $\bm S_0=(S^1_0,\dots,S^d_0)$ are the initial prices of stocks.

We assume that the maturities of all the European options in the portfolio are the same, denoted by $T$. We want to measure the portfolio risk at a future time $\tau$ ($\tau<T$). In the simulation, we first simulate the random variable $\bm \omega=\bm S_\tau=(S^1_\tau,\dots,S^d_\tau)$ under real-world probability measure, which denotes the prices of stocks at the risk horizon $\tau$ as the outer sample. And then simulate $\bm S_T=(S^1_T,\dots,S^d_T)$ under the risk-neutral probability measure which is the prices of stocks at maturity $T$ given $\bm \omega$ as the inner samples. Let $V_0=\sum_{i=1}^d v^i_0$ denote the initial value of the portfolio, where each $v^i_0$ can be calculated using the Black-Scholes pricing formula \cite{hull2016}. The change in the portfolio's value is 
\begin{equation*}
\varphi(\bm\omega) := V_0-\mathbb{E}[V_T(\bm S_T)|\bm\omega],
\end{equation*}
where $V_T(\bm S_T)$ is the discounted payoff of the portfolio at time $T$ which is a known function of $\bm S_T$.

The target of our numerical experiments is to estimate the probability of loss $\theta=\mathbb{P}(\varphi(\bm\omega)>c)$ for a given threshold $c$.
The number of inner samples is set as $m_\ell=32\times 2^\ell$ in all experiments.

\subsection{Multiple assets}
Now we consider a portfolio consists of $d$ European call options, which was studied in \cite{multiple_assets}. Given the outer sample $\bm\omega=(\omega_1,\dots,\omega_d)=\bm S_\tau=(S^1_\tau,\dots,S^d_\tau)$ which denotes the prices of stocks at the risk horizon $\tau$ under real-world measure
\begin{equation*}
    \omega_i = S^i_\tau =S^i_0\exp\{(\mu-\frac{1}{2}\sum_{j=1}^d\sigma^2_{ij})\tau+\sum_{j=1}^d\sigma_{ij}\sqrt{\tau}Z_j\},\ i=1,\dots,d,
\end{equation*}
where $\bm Z=(Z_1, \dots, Z_d) \sim N(\bm 0,\bm I_d) $ and $\bm K=(K_1,\dots,K_d)$, the strike prices for call options, the portfolio value change is
\begin{equation*}
\varphi(\bm\omega) = V_0-\mathbb{E}[e^{-\mu_0(T-\tau)}\sum_{i=1}^d(S^i_T(\omega_i,\bm W)-K_i)^+|\bm\omega],
\end{equation*}
 the expectation is taken over the random variable
$\bm W=(W^1,\dots,W^d)\sim N(\bm 0,\bm I_d)$, and samples of
$$S^i_T(\omega^i,\bm W)
=\omega^i\exp\{(\mu_0-\frac{1}{2}\sum_{j=1}^d\sigma^2_{ij})(T-\tau)+\sum_{j=1}^d\sigma_{ij}\sqrt{T-\tau}W^j\},\ i=1,\dots,d,$$
are simulated under the risk-neutral measure.

Next, we provide the analytically expression for the preintegration of the loss function with respect to $\omega_1$. 
 For simplicity, we suppose that $\sigma_{1j} = \sigma_{i1} = 0$ for $ 1<i,j\le d$. We rewrite the function $\varphi(\bm \omega)$ as 
\begin{align*}
    \varphi(\bm \omega) &= -\varphi_1(\omega_1) + \varphi_{-1}(\bm \omega_{-1})\\
    &:= -\mathbb{E}[e^{-\mu_0(T-\tau)}(S^1_T(\omega^1,\bm W)-K^1)^+|\omega_1] \\
    &+V_0-\mathbb{E}[e^{-\mu_0(T-\tau)}\sum_{i=1}^d(S^i_T(\omega_i,\bm W)-K_i)^+|\bm\omega_{-1}],
\end{align*}
where
\begin{equation*}
    \omega_1 = S^1_\tau=S^1_0\exp\{(\mu-\frac{1}{2}\sigma^2_{11})\tau+\sigma_{11}\sqrt{\tau}W^1\},
\end{equation*}
and
\begin{equation*}
    \omega_i = S^i_\tau=S^i_0\exp\{(\mu-\frac{1}{2}\sum_{j=2}^d\sigma^2_{ij})\tau+\sum_{j=2}^d\sigma_{ij}\sqrt{\tau}W^j\},\ i=2,\dots,d.
\end{equation*}

According to BS model European call option formula 
\begin{equation*}
    \varphi_1(\omega_1) = \omega_1\Phi(d_1) - K_1e^{-\mu_0(T-\tau)}\Phi(d_2),
\end{equation*}
where
\begin{equation*}
    d_1 =  \frac{log\frac{\omega_1}{K^1}+(\mu_0+\frac{\sigma_{11}^2}{2})(T-\tau)}{\sigma_{11}\sqrt{T-\tau}}, d_2 = d_1 - \sigma_{11}\sqrt{T-\tau} .
\end{equation*}
$\varphi_1$ is a monotonically increasing function, because
\begin{equation*}
    \partial_{\omega_1} \varphi_1(\omega_1) = \Phi(d_1) >0,
\end{equation*}
where $\partial_{\omega_1}\varphi$ is the Delta (Greeks) of $\varphi_1$ and due to monotonicity, there exists a function $\psi_d$ such that $\{\varphi(\bm \omega) > c\} = \{\varphi(\psi_d(\bm\omega_{-1}), \bm \omega_{-1}) > c\} = \{\omega_1 \le \psi_d\}$.
\begin{align*}
    \mathbb{E}[\mathbb{I}\{\varphi(\bm \omega)>c\}|\bm \omega_{-1}] &= \int_{\mathbb{R}} \mathbb{I}\{\varphi_1(\omega_1)\le\varphi_{-1}(\bm \omega_{-1})-c\} \rho_1(\omega_1)d\omega_1 \\
    &= \int_{\mathbb{R}} \mathbb{I}\{\omega_1 \le \psi_d\}\rho_1(\omega_1) d\omega_1\\
    &= \Phi\left(\frac{log\frac{\psi_d}{S_0^1}-(\mu-\frac{\sigma^2_{11}}{2})\tau}{\sigma_{11}\sqrt{\tau}}\right).
\end{align*}
In our numerical experiments, we use the inner simulation to approximate $\varphi_{-1}$ , denoted by $\varphi^{\ell}_{-1}$ in
$\ell$th level, then
\begin{align*}
    \mathbb{E}[\mathbb{I}\{\hat\varphi_\ell(\bm \omega)>c\}|\bm \omega_{-1}] &= \int_{\mathbb{R}} \mathbb{I}\{\varphi_1(\omega_1)\le\hat\varphi^\ell_{-1}(\bm \omega_{-1})-c\} \rho_1(\omega_1)d\omega_1 \\
    &= \int_{\mathbb{R}} \mathbb{I}\{\omega_1 \le \hat\psi^\ell_d\}\rho_1(\omega_1) d\omega_1\\
    &= \Phi\left(\frac{log\frac{\hat\psi^\ell_d}{S_0^1}-(\mu-\frac{\sigma^2_{11}}{2})\tau}{\sigma_{11}\sqrt{\tau}}\right).
\end{align*}
where $\hat \psi_d^\ell$  satisfies $\{\hat\varphi_\ell(\bm \omega) > c\} = \{\hat\varphi_\ell(\hat\psi^\ell_d(\bm\omega_{-1}), \bm \omega_{-1}) > c\} = \{\omega_1 \le \hat\psi^\ell_d\}$.

The parameters in our experiments are set as follows: initial prices of the assets $S^1_0=\dots=S^d_0=100$, $\mu = 8\%$, $\mu_0=5\%$, the strikes $K_1=\dots=K_d=95$, the maturity $T=0.1$, and the risk horizon $\tau =0.02$. Without loss of generality, we let $\Sigma=(\sigma_{ij})$ be a sub-triangular matrix satisfying $C=\Sigma\Sigma^T$ which corresponds with Cholesky decomposition of $C$, where $C_{ij}=0.3\cdot0.98^{|i-j|}$ for $ 1<i,j\le d$ and $C_{11} = 0.3 $.

We use the following algorithm \cite{giles_2015_mlmc, goda_nested_log} to compute the number of outer simulation samples $N_\ell$.

\begin{algorithm}[ht]
\caption{Let $\omega \in (0,1)$ be a user-specified parameter. For a target root-mean-square accuracy $\varepsilon$, start with $L=L_0$ and give an initial number of samples $N_*$ for all the levels $\ell=0,\dots,L$. Until extra samples need to be evaluated, repeat the following:}
\begin{algorithmic}[1]
\STATE Evaluate extra samples on each level.
\STATE Compute (or update) the empirical variances $V'_\ell$ for $\ell=0,\dots,L$.
\STATE Define optimal $N_\ell$ for $\ell=0,\dots,L$ according to
\[
N_\ell = \left\lceil (1-\omega)^{-1}\varepsilon^{-2}\sqrt{\frac{V'_\ell}{C_\ell}} \sum_{\ell=0}^L \sqrt{V'_\ell C_\ell} \right\rceil.
\]
\STATE Test for the bias convergence $|\mathbb{E}[Z_L]|/(2^\alpha - 1) \leq \sqrt{\omega}\varepsilon$, where we use the empirical estimates for $\mathbb{E}[Z_\ell]$ and $\alpha$.
\IF{the bias is not converged}
    \STATE Let $L = L + 1$ and give an initial number of samples $N_L$.
\ENDIF
\end{algorithmic}
\end{algorithm}

In this algorithm, the optimal allocation of \(N_\ell\) given in Item 3 is derived by minimizing
the total cost \(\sum_{\ell=0}^L N_\ell C_\ell\) for a fixed variance \(\sum_{\ell=0}^L V_\ell/N_\ell = (1-\omega)\varepsilon^2\). The bias convergence rate $\alpha$ in Item 4 is $1$, due to the theoretical bias in Theorem \ref{thm:bias}.


All numerical experiments are based on the Black-Scholes model for a portfolio of European call options. For all levels $\ell$, the inner sample size is set as $m_{\ell}=32 \times 2^{\ell}$. The outer sample size $N_{\ell}$ is chosen adaptively by Algorithm 1 with parameters $\omega=0.16$, $L_0=2$ and $N_*=200,000$.  We compare the standard Multilevel Monte Carlo (MLMC) and the proposed MLMC with preintegration (smoothed MLMC) in terms of the absolute value of expectation, variance, kurtosis and total computational cost, with the dimension covering $d=4,8,16$.  As we know, using an antithetic estimator in \cref{eq:antith_estimator} does not change the variance convergence rate in the crude MLMC because of the indicator function. It is of interest to test whether antithetic sampling can get more benefit in the smoothed method. Therefore, we also test the strong convergence rate and computational cost of smoothed MLMC with antithetic sampling (denoted by smoothed AMLMC) in the following experiments. 

\begin{figure}
    \centering 
    \includegraphics[width=1.\textwidth]{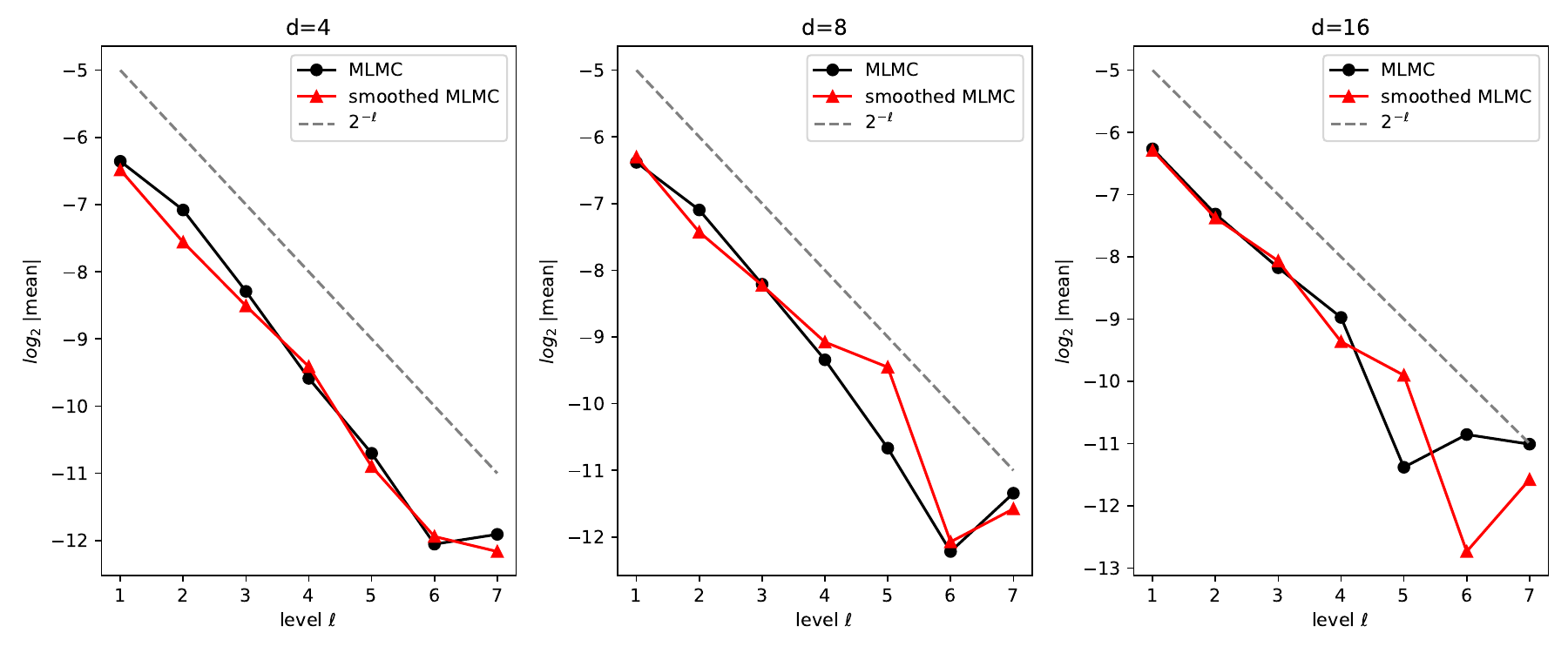}
    \caption{Expectation for dimensions $d=4,8,16$} 
    \label{fg:mean}
\end{figure} 
Figure \ref{fg:mean} shows the weak convergence behavior of MLMC and smoothed MLMC for dimensions $d=4,8,16$. For both methods, $\log_2|mean|$ decreases as the level increases, which means the estimation bias becomes smaller when we use more inner samples.  The convergence rate of the smoothed method is almost the same as the standard MLMC method. This matches our theoretical expectation: preintegration does not change the convergence rate or the bias of the estimator. However, the curve of the smoothed MLMC is much smoother and more stable than standard MLMC. The preintegration method greatly improves the stability of the expectation estimation, even though it does not change the convergence speed.

\begin{figure}
    \centering 
    \includegraphics[width=1.\textwidth]{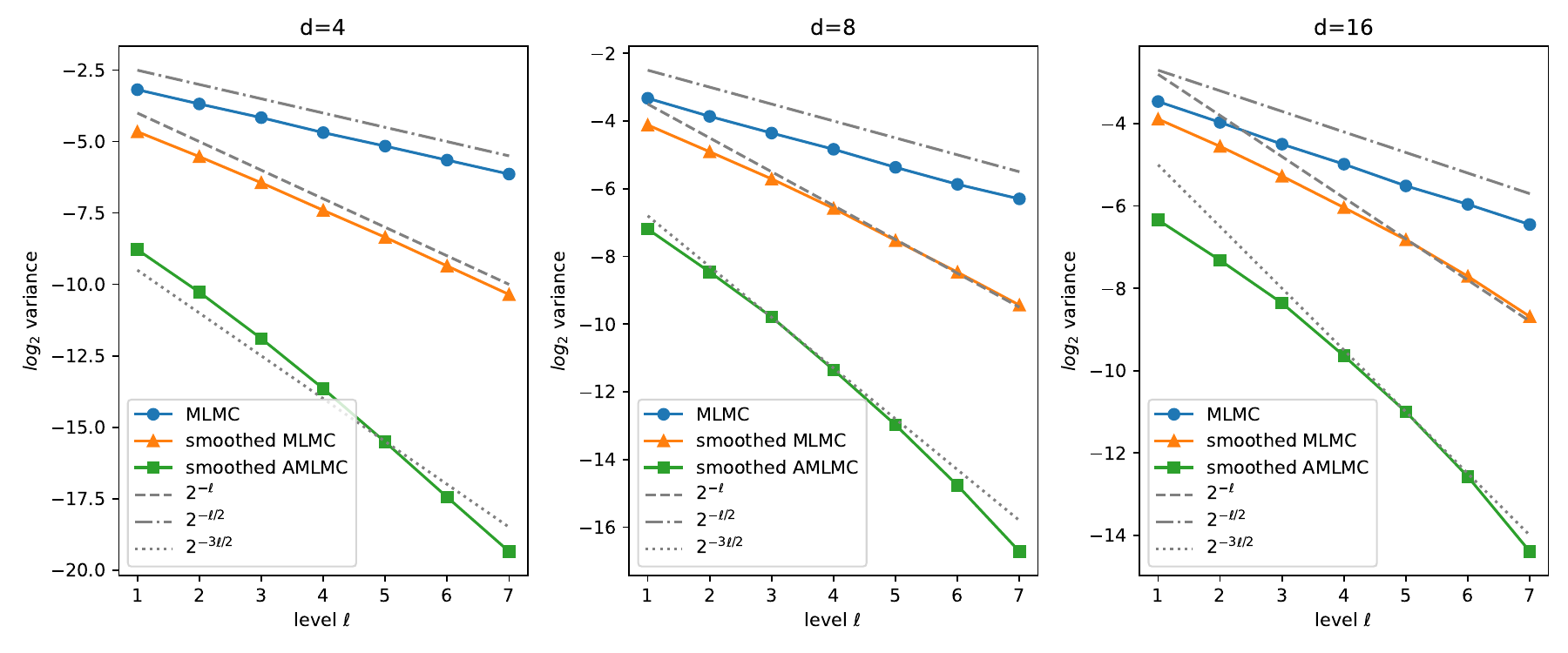}
    \caption{Variance for dimensions $d=4,8,16$} 
    \label{fg:var}
\end{figure}
Figure \ref{fg:var} shows the strong convergence of the three methods for $d=4,8,16$. The variance decay of the standard MLMC is slow and its rate is near $-1/2$, while the variance of the smoothed MLMC decays fast with the increase of the level and its rate is near $-1$, which verifies the conclusion of Theorem \ref{thm:var} that $V_{\ell}=O(m_{\ell}^{-1})$. The AMLMC method achieves a convergence rate even close to $-3/2$. Its variance is reduced by a factor of $10,000$ at the finest level compared with the standard MLMC method. This demonstrates the superiority of our smoothed MLMC method combined with antithetic sampling.

\begin{figure}
    \centering 
    \includegraphics[width=1.\textwidth]{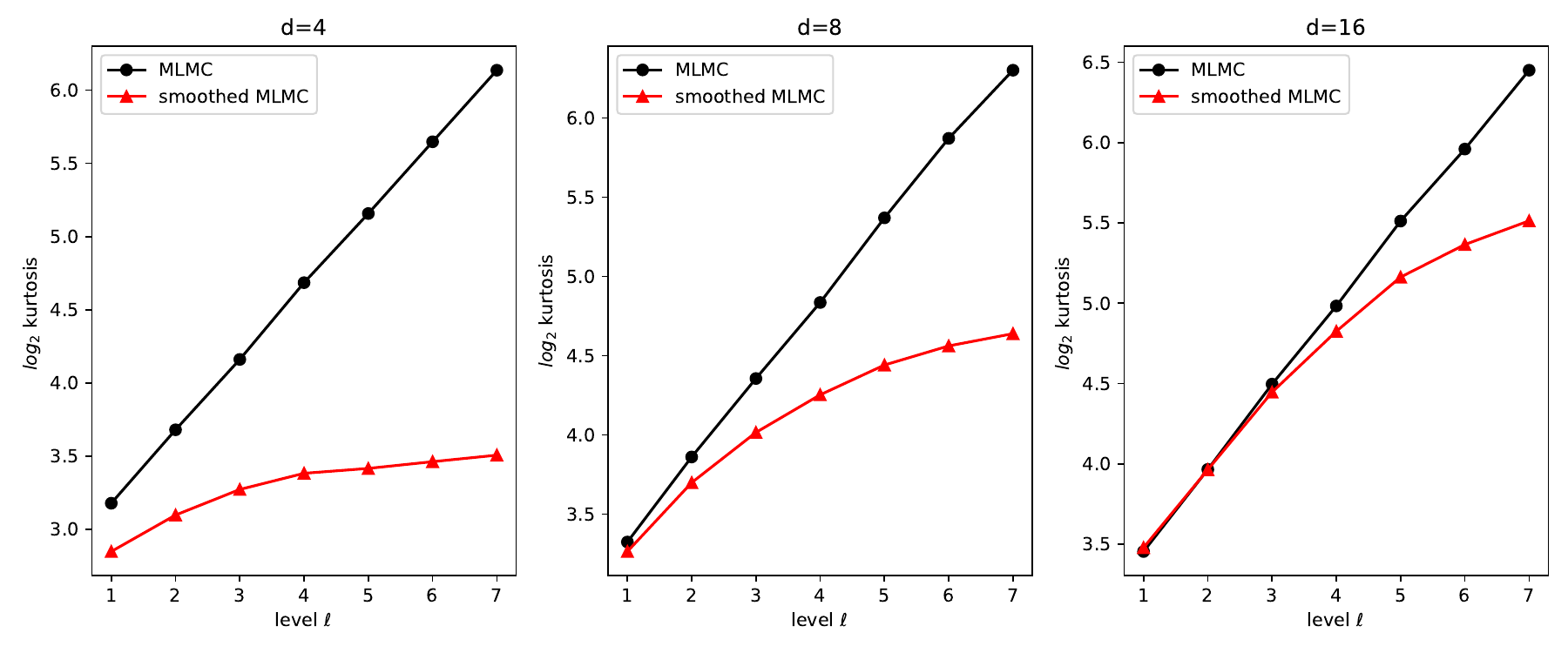}
    \caption{Kurtosis for dimensions $d=4,8,16$} 
    \label{fg:kurt}
\end{figure} 
Figure \ref{fg:kurt} reflects the kurtosis of the two methods. The kurtosis of the standard MLMC increases significantly with the level, which is the high-kurtosis phenomenon caused by the discontinuity of the indicator function. In contrast, the kurtosis of the smoothed MLMC remains at a low level and gradually approaches a certain upper bound, which verifies Corollary \ref{corollary:kurt} that the kurtosis of the proposed method is bounded ($\kappa_{\ell}=O(1)$). The preintegration technique effectively eliminates the discontinuity of the indicator function and improves the robustness of the estimation.

\begin{figure}
    \centering 
    \includegraphics[width=1.\textwidth]{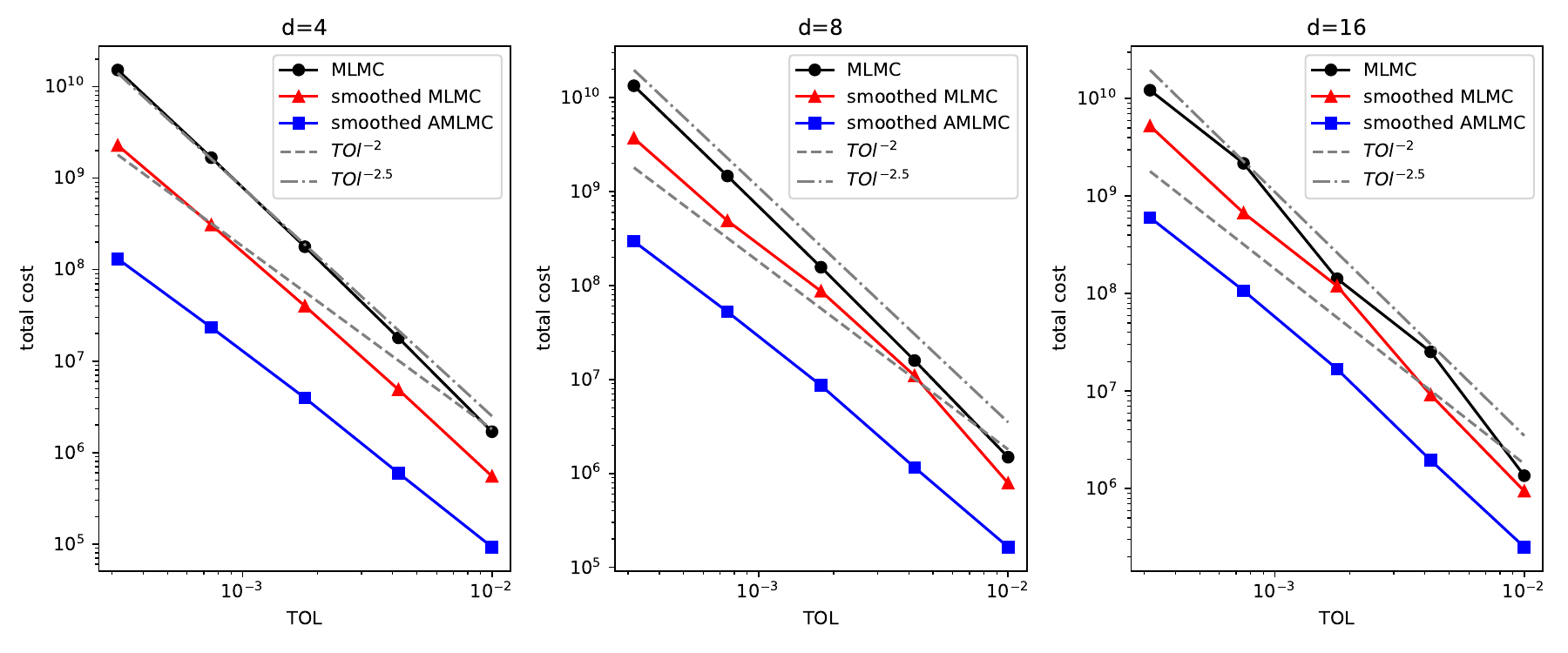}
    \caption{Cost for dimensions $d=4,8,16$} 
    \label{fg:cost}
\end{figure} 
Figure \ref{fg:cost} compares the total computational cost of the three methods for $d=4,8,16$. 
Smooth MLMC achieves a near optimal computational complexity $O(\text{TOL}^{-2})$, compared with the complexity $O(\text{TOL}^{-5/2})$ of standard MLMC. Smoothed AMLMC also attains the optimal complexity $O(\text{TOL}^{-2})$, while reducing the computational burden more effectively, its complexity is 100 times lower than that of the standard MLMC method.

From the variance results above, we observe that the reduction factor of the variance of smoothed MLMC decreases as the dimension increases. This is not a flaw of the smoothed method itself. When we derive the preintegration formula, we use inner simulation to approximate the function $\varphi$. As the dimension increases, the accuracy of the inner simulation decreases, which reduces the accuracy of our preintegration approximation. To fix this accuracy issue of inner simulation in high-dimensional problems, we introduce the quasi-Monte Carlo (QMC) method and combine it with smoothed MLMC.

Next, we incorporate QMC methods within the inner simulation. To fit QMC framework, we assume that given $\bm \omega=\y$, $X$ can be generated via the mapping
\begin{equation*}
X(\y)= \psi(\bm u;\y),
\end{equation*}
for some function $\psi$ and $\bm u \sim U[0,1)^s$ with dimension $s$. As a result, the inner estimator \cref{eq:hat_g_varphi} is replaced by
\begin{equation}\label{eq:qmc_estimator}
\hat \varphi_m(\y) = \frac 1 m\sum_{j=1}^m \psi(\bm u_j;\y),
\end{equation}
and $\varphi(\y) = \int_{[0,1)^s} \psi(\bm u;\y) d \bm u$.
If $\bm u_1,\dots,\bm u_m$ are QMC points (known as low discrepancy points), which are deterministic points chosen from $[0,1)^d$ and are more uniformly distributed than random points, the approximation \cref{eq:qmc_estimator} refers to the QMC method.

The error of the QMC quadrature can be bounded by the Koksma-Hlawka inequality \cite{niederreiter1992}
\begin{equation}
|\varphi(\y)-\hat \varphi_m(\y)|\le V_{\mathrm{HK}}\big(\psi(\cdot;\y)\big)D^*(\bm u_1,\dots,\bm u_m),\label{eq:kh_ineq}
\end{equation}
where $V_{\mathrm{HK}}\big(\psi(\cdot;\y)\big)$ is the variation of the integrand $\psi(\cdot;\y)$ for given $\y$ in the sense of Hardy and Krause which measures the smoothness of $\psi(\cdot;\y)$, and $D^*(\bm u_1,\dots,\bm u_m)$ is the star discrepancy which measures the uniformity of the points set $\{\bm u_1,\dots,\bm u_m\}$. The Koksma-Hlawka inequality \cref{eq:kh_ineq} implies that for functions of finite variation, the convergence rate of QMC approximation is determined by the factor $D^*(\bm u_1,\dots,\bm u_m)$, which is of order $O(m^{-1}(\log m)^s)$ for low discrepancy points.

There are various constructions of QMC point sets in the literature \cite{dick2017}, such as digital nets and lattice rule point sets. For the following numerical experiments, we use $(t,d)$-sequences in base $b\ge 2$.
and also use randomized QMC (RQMC) points. RQMC retains the essential equi-distribution structure, while allowing a statistical error estimation based on independent replications. Moreover, RQMC is able to improve the rate of convergence for smooth integrands, such as the Owen's scrambling technique \cite{owen1995}. For a survey on RQMC, we refer to \cite{l2002}.

\begin{figure}
    \centering 
    \includegraphics[width=1.\textwidth]{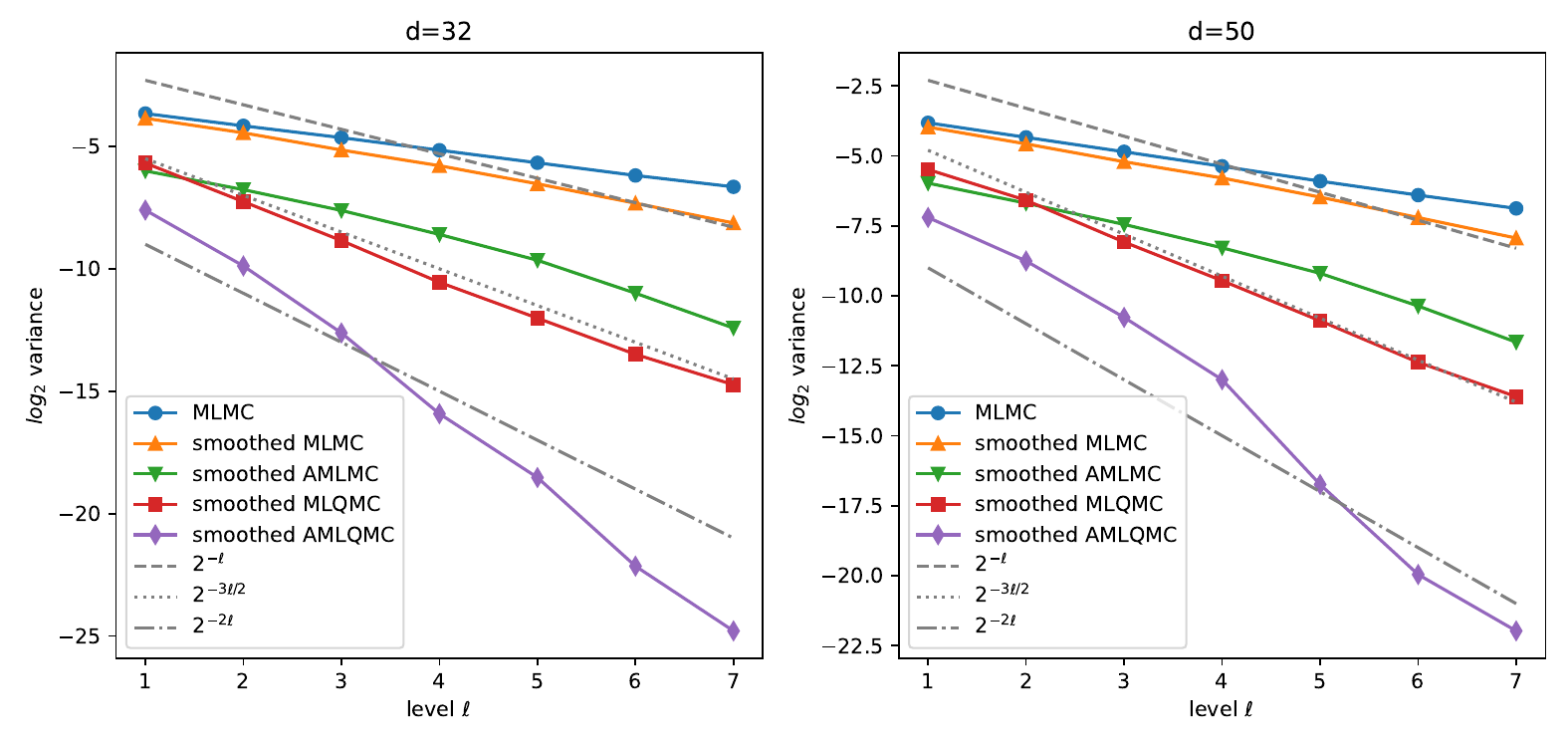}
    \caption{Variance for dimensions $d=32, 50$} 
    \label{fg:var_qmc}
\end{figure} 
\begin{figure}
    \centering 
    \includegraphics[width=0.5\textwidth]{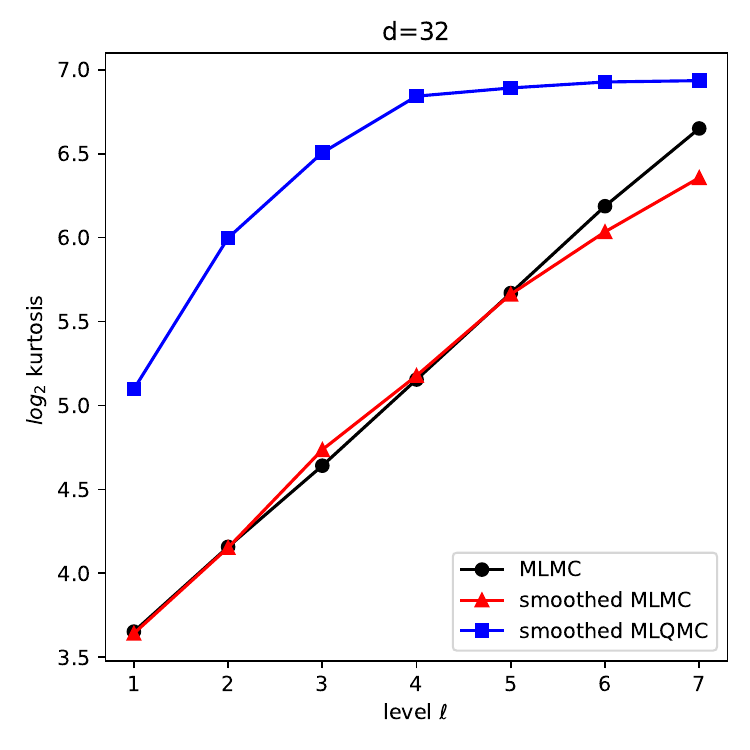}
    \caption{Kurtosis for dimension $d=32$} 
    \label{fg:kurt_qmc}
\end{figure}
Figure \ref{fg:var_qmc} tests the strong convergence performance in high-dimensional cases $d=32,50$. Smoothed MLMC maintains a convergence rate of $-1$ even for high-dimensional problems. Besides, both smoothed MLQMC and smoothed AMLMC exhibit a faster convergence rate approaching $-3/2$, with smoothed MLQMC yielding a smaller variance. The smoothed MLQMC method also demonstrates superior numerical stability, as its kurtosis growth is bounded from above in Figure \ref{fg:kurt_qmc}. Finally, by combining smoothed MLQMC with antithetic sampling (smoothed AMLQMC), its convergence rate even exceeds $-2$, and its variance is reduced by a factor of $1,000,000$ compared to MLMC at the finest level. However, smoothed AMLQMC also suffers from its high strong convergence rate, which leads to an increased kurtosis and degraded numerical stability.

\section{Conclusion}\label{sec:conclusion}

This paper develops a nested MLMC method with preintegration to solve the efficiency and robustness problems of standard nested simulation in risk estimation. The preintegration technique effectively handles the discontinuity of the indicator function in risk measures by integrating out one outer random variable. The theoretical analysis proves that the proposed method has fast variance decay ($O(m_\ell^{-1})$), bounded kurtosis and lower computational complexity compared with the standard MLMC, which means better convergence and robustness.

Numerical experiments on European option portfolios under the Black-Scholes model validate the superiority of the smoothed MLMC with preintegration. It shows more stable weak convergence, faster decaying level variance and bounded kurtosis, which overcomes the high-kurtosis and slow variance decay problems of the standard MLMC. In terms of computational cost, the proposed method requires much less cost to achieve the same accuracy. For high-dimensional problems, combining the proposed method with QMC further improves the convergence performance, solving the accuracy decline of inner simulation in high dimensions.

The proposed method provides a new efficient approach for financial risk estimation, especially for high-dimensional portfolio risk measurement. Future research can focus on optimizing the parameter selection of the MLMC with preintegration, such as the optimal allocation of quadrature points and Newton iteration tolerance across different levels, and extending the method to more complex financial models (e.g., stochastic volatility models) and other risk measures (e.g., expected shortfall). Furthermore, the theoretical analysis combining MLQMC and preintegration has not yet been established, which can also serve as an important direction for future research.
\appendix
\section{Appendix}\label{sec:Appendix}
\subsection{Proof of Lemma \ref{lemma: boundary_condition_error growth}}
\begin{lemma}\label{lemma: boundary_condition_error growth}	Assume that $\varphi$ in \eqref{eq:theta}  satisfy Assumption \ref{assume:varphi}. Then we have

		\begin{equation*}
			\underset{\abs{\omega_1} \rightarrow \infty}{\lim}  	\mathbb{E}\left[  \left(e_{\ell}(\omega_1,\bm \omega_{-1}) \; \rho_1(\omega_1) \int_{0}^{1}  g (z(\theta; \omega_1,\bm \omega_{-1}) )  \left(\partial_{\omega_1} z(\theta; \omega_1,\bm \omega_{-1}) \right)^{-1}   d \theta  \right)^2\right]=0.
		\end{equation*}

\end{lemma}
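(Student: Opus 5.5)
Since $g$ is an indicator, $0\le g\le 1$, so the integrand is dominated by
\[
\rho_1(\omega_1)^2\, e_\ell^2(\omega_1,\bm\omega_{-1}) \left(\int_0^1 \bigl|(\partial_{\omega_1} z(\theta;\omega_1,\bm\omega_{-1}))^{-1}\bigr| \,d\theta\right)^2 .
\]
The plan is to show that the expectation of the last two factors is bounded uniformly in $\omega_1$. Because $\rho_1(\omega_1)\to 0$ as $|\omega_1|\to\infty$, the limit is then zero. That $\rho_1$ vanishes at infinity is not automatic for a density, but it follows from Assumption \ref{assume:score}: $s$ integrable against $\rho_1$ means $\rho_1'\in L^1$, so $\rho_1$ has finite limits at $\pm\infty$, and these must be zero because $\rho_1$ is integrable. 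Alternatively, one could simply assume $\rho_1$ bounded and vanishing at infinity, as holds for the Gaussian/lognormal case treated in the experiments.

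\textbf{Step 1: the reciprocal derivative.} By \eqref{eq:partial_z_1}, $\partial_{\omega_1} z = (1-\theta)\partial_{\omega_1}\hat\varphi_{\ell-1} + \theta\,\partial_{\omega_1}\hat\varphi_\ell$. Under the monotonicity Condition \ref{condition:a}, assumed for the sampled $\hat\varphi$ as well, both derivatives are positive. Hence $\partial_{\omega_1} z \ge \min(\partial_{\omega_1}\hat\varphi_{\ell-1},\partial_{\omega_1}\hat\varphi_\ell)$, and
\[
\bigl|(\partial_{\omega_1} z)^{-1}\bigr| \le (\partial_{\omega_1}\hat\varphi_{\ell-1})^{-1} + (\partial_{\omega_1}\hat\varphi_{\ell})^{-1},
\]
uniformly in $\theta$. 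By Assumption \ref{assume:varphi_1} and Definition \ref{def:O1}, this is dominated by a random variable $C$ with finite moments of all orders. That random variable does not depend on $\omega_1$, which is exactly the uniformity needed.

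\textbf{Step 2: the inner sampling error.} Use Cauchy--Schwarz:
\[
\mathbb{E}\bigl[e_\ell^2 C^2 \,\big|\, \omega_1\bigr] \le \left(\mathbb{E}\bigl[e_\ell^4(\omega_1,\bm\omega_{-1})\bigr]\right)^{1/2} \left(\mathbb{E}[C^4]\right)^{1/2}.
\]
The bound on $e_\ell$ is the pointwise-in-$\omega_1$ version of Lemma \ref{lemma:e_l}. Conditionally on $\bm\omega$, Lemma \ref{lemma:MC_p} bounds $\mathbb{E}[e_\ell^4\mid\bm\omega]$ by a constant times $m_{\ell}^{-2}\,\mathbb{E}[|X-\varphi|^4\mid\bm\omega]$. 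The conditional moment is in turn controlled by the $\mathcal{O}(1)$ dominating variable from Assumption \ref{assume:varphi} (following the same step as in the proof of Lemma \ref{lemma:e_l}). So $\sup_{\omega_1}\mathbb{E}[e_\ell^4(\omega_1,\bm\omega_{-1})]<\infty$; the decay in $m_\ell$ is not even needed here.

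\textbf{Step 3: conclusion.} Multiplying the two uniform bounds by $\rho_1(\omega_1)^2\to 0$ gives the result.

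\textbf{Main obstacle.} The hardest point is the uniformity in $\omega_1$ in Steps 1--2. Definition \ref{def:O1} supplies a single dominating variable valid for all indices; I will read it as also uniform over $\omega_1$ (a dominating $C(\bm\omega_{-1})$). If that reading is too strong, the fallback is to show only integrability in $\omega_1$. Concretely, prove
\[
\int \mathbb{E}\bigl[e_\ell^2 C^2\bigr]\,\rho_1^2\, d\omega_1 < \infty
\]
and extract the limit along $|\omega_1|\to\infty$ using $\rho_1'$ and the score bound of Assumption \ref{assume:score}. That argument is more delicate.
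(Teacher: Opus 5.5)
Your argument is essentially the paper's: bound $g$ by $1$, control $(\partial_{\omega_1} z)^{-1}$ and $e_\ell$ in moments uniformly in $\omega_1$ via Assumptions \ref{assume:varphi_1} and \ref{assume:varphi}, and let the factor $\rho_1(\omega_1)\to 0$ finish. The technical differences are minor: the paper bounds $e_\ell$ pointwise through $e_\ell^2\le 2\hat\varphi_\ell^2+2\hat\varphi_{\ell-1}^2$ and $\hat\varphi_\ell^2\le \frac{1}{m_\ell}\sum_{j} X_j^2$ rather than by Cauchy--Schwarz and Lemma \ref{lemma:MC_p}, and it leaves implicit what you make explicit (the decay of $\rho_1$ via Assumption \ref{assume:score}, the positivity of both $\partial_{\omega_1}\hat\varphi_\ell$ and $\partial_{\omega_1}\hat\varphi_{\ell-1}$, and the uniformity in $\omega_1$), though both versions tacitly need uniform-in-$\omega_1$ moment bounds on the inner samples $X$ themselves, which Assumption \ref{assume:varphi}, being a statement about $\varphi$ only, does not literally supply.
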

\begin{proof}
	We have $g(\cdot)$ is bounded and we have also $ \left(\partial_{\omega_1} z(\theta;\omega_1,\bm \omega_{-1}) \right)^{-1}$ is bounded in moments (similar to what we showed for \eqref{eq:partial_z_1} in the proof of Theorem \ref{thm:var}). Consequently, we need  to show that $  	  e^2_{\ell}(\omega_1,\bm \omega_{-1}) \;  =\mathcal{O}(1)$. 
	
	First, we observe that  $  e^2_{\ell}(\omega_1,\bm \omega_{-1}) \le  2\hat \varphi_{\ell}^2(\omega_1,\bm \omega_{-1})+ 2\hat \varphi_{\ell-1}^2(\omega_1,\bm \omega_{-1})$. Therefore, to conclude our target result, we just  need to get a bound on $\ \hat \varphi_{\ell}^2(\omega_1,\bm \omega_{-1}) $.
\begin{align*}
     \hat \varphi_{\ell}^2(\omega_1,\bm \omega_{-1}) &= \left(\frac{1}{m_{\ell}} \sum_{j=1}^{m_\ell} X_j(\omega_1,\bm \omega_{-1})\right)^2\le \frac{1}{m_{\ell}} \sum_{j=1}^{m_\ell}  X_j^2(\omega_1,\bm \omega_{-1})
\end{align*}
Using Assumption \ref{assume:varphi}, we get $ \hat \varphi_{\ell}^2(\omega_1,\bm \omega_{-1}) = \mathcal{O}(1)$.
Finaly, we get 
\begin{equation*}
			\underset{\abs{\omega_1} \rightarrow \infty}{\lim}  	\mathbb{E}\left[  \left(e_{\ell}(\omega_1,\bm \omega_{-1}) \; \rho_1(\omega_1) \int_{0}^{1}  g (z(\theta; \omega_1,\bm \omega_{-1}) )  \left(\partial_{\omega_1} z(\theta; \omega_1,\bm \omega_{-1}) \right)^{-1}   d \theta  \right)^2\right]=0.
		\end{equation*}
	
\end{proof}

\bibliographystyle{plain}
\bibliography{references}
\end{document}